\newcommand\N{{\mathbb N}} \newcommand\R{{\mathbb R}}
\newcommand\Z{{\mathbb Z}} \newtheorem{thm}{Theorem}
\newtheorem{rmq}{Remark}[section] \newtheorem{lemma}{Lemma} %
 \newtheorem{prop}{Proposition}
\newcommand{\DomainXYT}{\mathcal{R}}
\begin{document}

 \title{New counterexamples to Strichartz estimates for the wave equation on a 2D model convex domain}

\author{Oana Ivanovici}
\address{Sorbonne Université, CNRS, Laboratoire Jacques-Louis Lions, LJLL, F-75005 Paris, France} \email{oana.ivanovici@sorbonne-universite.fr}

  \author{Gilles Lebeau}
  \address{Universit\'e Côte d'Azur, CNRS, Laboratoire JAD, France} \email{gilles.lebeau@univ-cotedazur.fr}

  \author{Fabrice Planchon}
  \address{ Sorbonne Université, CNRS, Institut Mathématique de Jussieu-Paris Rive Gauche, IMJ-PRG F-75005 Paris, France}
  \email{fabrice.planchon@sorbonne-universite.fr} 
  
      \thanks{{\it Key words}  Dispersive estimates, wave equation, Dirichlet boundary condition.\\
    \\
 O.Ivanovici and F. Planchon were supported by ERC grant ANADEL 757 996.
    } 

\begin{abstract} We prove that the range of Strichartz estimates on a model 2D convex domain may be further restricted compared to the known counterexamples from \cite{doi,doi2}. Our new family of counterexamples is built on the parametrix construction from \cite{Annals} and revisited in \cite{ILP3}. Interestingly enough, it is sharp in at least some regions of phase space.
\end{abstract}
\maketitle

\section{Introduction and main results}

Let us consider the wave equation on a domain $\Omega$ with boundary
$\partial \Omega$ ,
\begin{equation} \label{WE} \left\{ \begin{array}{l} (\partial^2_t-
\Delta) u(t, x)=0, \;\; x\in \Omega \\ u|_{t=0} = u_0,\,\,\, \partial_t
u|_{t=0}=u_1,\\ Bu=0,\quad x\in \partial\Omega.
 \end{array} \right.
 \end{equation}
Here,  $\Delta$ stands for the Laplace-Beltrami operator on
$\Omega$. If $\partial\Omega\neq \emptyset$, the boundary condition
could be either Dirichlet ($B$ is the identity map:
$u|_{\partial\Omega}=0$) or Neumann ($B=\partial_{\nu}$ where $\nu$ is
the unit normal to the boundary.) We will take $B=Id$ but the argument may be adapted to Neumann.

The so called Strichartz estimates aim at quantifying dispersive
properties of the solutions to this linear wave equation: for given
data in the natural energy space, the solution will have better decay
for suitable time averages.  This is of value for several applications, of which we quote only two:
\begin{itemize}
\item nonlinear problems, where Strichartz may be used as a tool to improve on Sobolev embeddings and allow for better nonlinear mapping properties of solutions;
\item  localization properties of (clusters of) eigenfunctions of the Laplacian (through square function estimates for the wave equation which are closely related to Strichartz estimates).
\end{itemize}

On any compact Riemannian manifold with empty boundary, the solution to \eqref{WE} is such that, at least for a suitable $t_{0}<+\infty$, for all $h<1$,
\begin{equation}\label{stricrd} h^{\beta}\|\chi(hD_t)u\|_{L^q([-t_{0},t_{0}],
L^r)}\leq C\left(\|u(0,x)\|_{L^2}+\|hD_t u\|_{L^2}\right),
\end{equation} 
where $\chi\in C^{\infty}_0$ is a smooth truncation in a neighborhood of $1$. Let $d$ be the spatial dimension of $\Omega$, then rescaling dictates that $\beta=d\left(\frac 12-\frac 1r\right)-\frac 1q$, where $(q,r)$ is a so-called admissible pair:
\begin{equation}\label{adm} 
\frac 1q\leq \frac{(d-1)} 2 \left(\frac{1}{2}-\frac{1}{r}\right),\quad  q>2.
\end{equation}
On non compact manifolds, one would have to assume suitable geometric assumptions to allow these estimates to hold globally: when \eqref{stricrd} holds for $t_{0}=+\infty$, it is said to be a global in time Strichartz estimate. For $\Omega=\mathbb{R}^d$ with flat metric, the solution  $u_{\mathbb{R}^d}(t,x)$ to
\eqref{WE} with initial data $(u_0=\delta_{x_{0}}, u_1=0)$ has an explicit representation formula 
 \[ u_{\mathbb{R}^d}(t,x)=\frac{1}{(2\pi)^d}\int
\cos(t|\xi|)e^{i(x-x_{0})\xi}d\xi
 \]
and by usual stationary phase methods one gets dispersion:
\begin{equation}\label{disprd}
\|\chi(hD_t)u_{\mathbb{R}^d}(t,.)\|_{L^{\infty}(\mathbb{R}^d)}\leq
C(d) h^{-d}\min\{1, (h/t)^{\frac{d-1}{2}}\}.
\end{equation}
Interpolation between  \eqref{disprd} and energy estimates, together with a duality argument, routinely provides
\eqref{stricrd} (\cite{Stri77}, \cite{Pe84}, \cite{GV85}). On any (compact) Riemannian manifold without boundary $(\Omega,g)$ one may follow the same path, replacing the exact formula by a parametrix, which may be constructed locally (in time and space) within a small ball, thanks to finite speed of propagation (\cite{Kapi89}, \cite{MSS93}). By routine computations, one may deduce from the semi-classical estimate \eqref{stricrd} standard estimates involving mixed Lebesgue-Besov norms on the left handside and Sobolev spaces on the right handside; these are better suited to dealing with nonlinear problems. 

On a manifold with boundary, the geometry of light rays becomes much more complicated, and one may no longer think that one is slightly bending flat trajectories. There may be gliding rays (along a convex boundary) or grazing rays (tangential to a convex obstacle) or combinations of both. Strichartz estimates outside a strictly convex obstacle were obtained in \cite{smso95} and turned out to be similar to the free case (see \cite{ildispext} for the more complicated case of the dispersion). Strichartz estimates with losses were obtained later on general domains, \cite{blsmso08}, using short time parametrices constructions from \cite{smso06}, which in turn were inspired by works on low regularity metrics \cite{tat02}. Most of these works focus either on compact domains with boundary or exterior domains, although one may combine existing results to deal with unbounded domains with suitable control over geometry at infinity.

In our work \cite{Annals}, a parametrix for the wave equation inside a model of strictly convex domain was constructed that provided optimal decay estimates, uniformly with respect to the distance of the source to the boundary, over a time length of constant size. This involves dealing with an arbitrarily large number of caustics and retain control of their order. Our dispersion estimate from \cite{Annals} is optimal and immediately yields by the usual argument Strichartz estimates with a range of pairs $(q,r)$ such that
\begin{equation}\label{adm-1/4} 
\frac 1q\leq \left(\frac{(d-1)} 2 -\frac 1 4\right)\left(\frac{1}{2}-\frac{1}{r}\right),\quad  q>2\,
\end{equation}
where, informally, the new $1/4$ factor, when compared to \eqref{adm}, is related to the $1/4$ loss in the dispersion estimate from \cite{Annals}, when compared to \eqref{disprd}. On the other hand, earlier works \cite{doi,doi2} proved that Strichartz estimates on strictly convex domains can hold only if, when $r>4$, $(1/q,1/r)$ are below a line connecting the pair $(1/q_{4},1/4)$ (from free space) and $(1/q_{\infty},0)$  such that
\begin{equation}\label{adm-1/4bis} 
\frac 1 {q_{4}}=\frac{(d-1)} 2\left(\frac 1 2 -\frac 1 4\right) \,\,\text{ and }\,\, \frac 1 {q_{\infty}}= \left(\frac{(d-1)} 2 -\frac {1}{12}\right)\frac{1}{2}\,.
\end{equation}
We will restate the exact result later on as we provide a simplified proof for it. Our main purpose in the present work is to improve upon the negative results in dimension $d=2$; improvements on the positive side were obtained in \cite{ILP3}. In particular, for suitable microlocalized solutions we close the gap between known estimates and known counterexamples, providing a
near complete picture in a specific location in phase space. Before stating our main results, we start by describing our convex model domain. Our Friedlander model is the half-space, for $d\geq 2$,
 $\Omega_d=\{(x,y)| x>0, y\in\mathbb{R}^{d-1}\}$ with the metric $g_F$ inherited from the following Laplace operator,
 $\Delta_F=\partial^2_x+(1+x)\Delta_{\mathbb{R}^{d-1}_y}$, with Dirichlet boundary condition on $x=0$. The domain $(\Omega_d,g_F)$ is easily seen to be a strictly convex set, as a first order approximation of the unit disk $D(0,1)$ in polar coordinates $(r,\theta)$: set $r=1-x/2$, $\theta=y$.

We start by stating our results for $d=2$ and later provide the general statement in higher dimensions, using the same reduction as \cite{doi2} to take advantage of the 2D setting.
\begin{thm}
  \label{thm-1}
 Strichartz estimates \eqref{stricrd} may hold true on the domain $(\Omega_{2},g_{F})$ only if possible pairs $(q,r)$ are 
such that
\begin{equation}\label{adm-1/12} 
\frac 1q\leq \left(\frac{1} 2 -\frac 1 {10}\right)\left(\frac{1}{2}-\frac{1}{r}\right)\,.
\end{equation}
In particular, for $r=+\infty$, we have $q\geq 5$.
\end{thm}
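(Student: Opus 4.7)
The goal is to exhibit, for each small $h>0$, a family of exact solutions $u_h$ of \eqref{WE} on $(\Omega_2,g_F)$ whose $L^q_t L^r_{x,y}$ norm is too large relative to the $L^2$ norm of the initial data, unless the pair $(q,r)$ lies in \eqref{adm-1/12}. Since the right-hand side of \eqref{stricrd} can be normalized to a constant and both sides scale trivially in $r=2$, it suffices to show sharpness at (or arbitrarily close to) the endpoint $r=\infty$, $q=5$; the full linear constraint then follows by interpolation with the energy identity $L^\infty_t L^2_{x,y}$.

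\textbf{Step 1: choice of data.} On $\Omega_2$, the Dirichlet eigenfunctions of $-\Delta_F$ at $y$-frequency $\eta$ are $\phi_{k,\eta}(x)e^{iy\eta}$ with $\phi_{k,\eta}(x) = c_{k,\eta}\,\mathrm{Ai}(x\eta^{2/3}-\omega_k)$, $-\omega_k$ the zeros of $\mathrm{Ai}$, and eigenvalues $\lambda_k^2(\eta)=\eta^2+\omega_k\eta^{4/3}$. I would choose initial data of the form
\[
u_{h,0}(x,y)=\chi(y)\int \!a(\eta)\,e^{iy\eta}\!\sum_{k\sim K} b_k\,\phi_{k,\eta}(x)\,d\eta,
\]
with $a$ a smooth bump of width $\Delta\eta$ around $1/h$, $b_k$ a smooth window of width $\Delta K$ around an integer $K=K(h)$, and $\chi$ a fixed macroscopic cutoff in $y$. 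The parameters $K$, $\Delta K$ and $\Delta\eta$ are to be tuned; morally $K\sim h^{-\alpha}$, placing the wavepacket in a boundary layer at depth $x_0\sim (\omega_K/\eta^{2/3})\sim h^{2/3}K^{2/3}$ and making it a superposition of whispering gallery modes concentrated there.

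\textbf{Step 2: evolution and refocusing.} The solution is simply
\[
u_h(t,x,y)=\chi(y)\int a(\eta) e^{iy\eta}\!\sum_{k\sim K} b_k\, e^{-it\lambda_k(\eta)}\phi_{k,\eta}(x)\,d\eta.
\]
Using $\lambda_k(\eta)\approx\eta+\tfrac12\omega_k\eta^{1/3}$ and the Airy zero asymptotics $\omega_k\sim(3\pi k/2)^{2/3}$, the phase spacing between successive modes is $\lambda_{k+1}(\eta)-\lambda_k(\eta)\sim h^{-1/3}K^{-1/3}$. Consequently the summed phases realign periodically on a time scale $\tau\sim h^{1/3}K^{1/3}$, producing, at each of the $\sim 1/\tau$ refocusing instants $t_n$ in a unit interval, a coherent peak where $|u_h|$ is comparable to the $\ell^1$ norm of the coefficients times the peak $L^\infty_x$-value of a single mode. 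Combining the Airy normalization $\|\phi_{k,\eta}\|_{L^\infty_x}\sim \eta^{1/6}k^{-1/12}$, the $x$-concentration length $\sim \eta^{-2/3}k^{-1/3}$ for each mode, and the $y$-spread $\sim 1/\Delta\eta$ coming from the $\eta$-integration, one obtains explicit lower bounds for $\|u_h\|_{L^q_t L^r_{x,y}}$ as well as the data norm $\|u_{h,0}\|_{L^2}$, each expressed in terms of $h$, $K$, $\Delta K$, $\Delta\eta$.

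\textbf{Step 3: optimization and main obstacle.} Inserting the two sides into the Strichartz inequality \eqref{stricrd} with $\beta=2(1/2-1/r)-1/q$ and letting $h\to 0$ yields a linear inequality in $(1/q,1/r)$ whose coefficients depend on the exponents $\alpha,\ldots$ governing $K,\Delta K,\Delta\eta$. Optimizing these three scalings should produce the line $1/q\leq (2/5)(1/2-1/r)$. The delicate step is this optimization: earlier counterexamples in \cite{doi,doi2} only ruled out the region past the line of slope $5/12$, i.e.\ $q>24/5$ at $r=\infty$; obtaining $q\geq 5$ requires balancing the refocusing count $1/\tau$ against the loss coming from the spread in $x$ and in $y$ in a way not exploited before. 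The main obstacle will be showing that the peaks at the $t_n$ are well-separated in the chosen time window and that off-peak contributions do not spoil the coherent lower bound — this reduces to a stationary phase analysis in $\eta$ at non-resonant times, which is also needed to control the exact shape of the initial data. The parametrix viewpoint of \cite{Annals}, revisited in \cite{ILP3}, provides exactly the Airy-integral representation needed to carry out these estimates and to relate the refocusing mechanism to the underlying cusp geometry of the gliding rays along $\partial\Omega$.
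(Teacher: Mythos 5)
Your construction is, at heart, the same object as the paper's: a superposition of gallery modes $k\sim K$ at tangential frequency $\eta\sim 1/h$, concentrated at depth $x_0\sim h^{2/3}K^{2/3}=:a$, which refocuses at the reflection times $t_n\sim n\sqrt a$. But two steps of the plan do not work as stated. First, the reduction to the endpoint $r=\infty$ ``by interpolation with the energy identity'' is invalid: the forbidden line $\frac 5q+\frac 2r\le 1$ passes through $(1/q,1/r)=(0,1/2)$, so interpolating a hypothetical estimate at a bad point $(q_0,r_0)$ with $r_0<\infty$ against the trivial $L^\infty_tL^2$ bound only yields points on the segment joining $(1/q_0,1/r_0)$ to $(0,1/2)$, which never reaches $1/r=0$; and Bernstein at frequency $1/h$ converts $(q_0,r_0)$ into $(q_0,\infty)$ with the same scaling $\beta$ but does not preserve violation of the constraint, since the line has nonzero slope in the $(1/r,1/q)$ plane. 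One must therefore lower-bound the $L^r_{x,y}$ norms directly for finite $r$, which the paper does by measuring the spatial support of the peak ($|\tilde X|\lesssim\lambda^{-2/3}$, $|\tilde Y|\lesssim\lambda^{-1}$, with $\lambda=a^{3/2}/h$), giving $\|hU(t,\cdot)\|_{L^r}\gtrsim\lambda^{-1/3-5/(3r)}$ and hence the full line $5/q+2/r\le 1$.

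Second, and this is the technical heart of the matter, the claim that at the refocusing instants $|u_h|$ is ``comparable to the $\ell^1$ norm of the coefficients times the peak value of a single mode'' is unjustified and, taken literally, incorrect: the eigenvalues $\lambda_k(\eta)$ are not in arithmetic progression (the curvature of $\omega_k\sim(3\pi k/2)^{2/3}$ in $k$ is encoded in the paper by $L(\omega)=\frac 43\omega^{3/2}+\frac\pi 2-B(\omega^{3/2})$), so the wave refocuses onto a cusp caustic rather than a point, and the peak amplitude is governed by an Airy-type integral. In the paper's normalization the peak of $h|U|$ is $\lambda^{-1/3}$ (an $\mathrm{Ai}(0)$ factor), attained on a time window whose length is controlled by the spectral regularization parameter $M$; the choice $M\sim\lambda^{1/3}$ together with $a\sim h^{1/3}$ is precisely what improves the old exponent $1/12$ to $1/10$. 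Your parameters $(K,\Delta K,\Delta\eta)$ do carry the same degrees of freedom, but the optimization is asserted rather than carried out, and without quantitative control of the decoherence across the window $\Delta K$ --- which the paper obtains via the Airy--Poisson formula, isolating a single reflection $N=J$ per space-time cell and performing stationary phase with a complex phase --- there is no way to conclude that the balance lands at $q\ge 5$ rather than at the older threshold $q\ge 24/5$.
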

\begin{rmq} Theorem \ref{thm-1} improves on the results from
  \cite{doi}: the range of admissible pairs is further restricted as $1/12$ is replaced by $1/10$ in the admissibility condition. Moreover, we no longer have a restricted range of $r$, unlike \cite{doi}.
\end{rmq}
In \cite{ILP3}, we obtained the following positive results:
\begin{thm}[\cite{ILP3}]
 Strichartz estimates \eqref{stricrd} hold true on $(\Omega_{2},g_{F})$ for $(q,r)$ such that
\begin{equation}\label{adm-1/9autrepapier} 
\frac 1q\leq \left(\frac{1} 2 -\frac 1 {9}\right)\left(\frac{1}{2}-\frac{1}{r}\right)\,.
\end{equation}
In particular, for $r=+\infty$, we have $q\geq 5+1/7$.
\end{thm}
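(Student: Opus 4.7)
The plan is to combine the parametrix of \cite{Annals} with a fine microlocal decomposition in the transverse frequency and apply $TT^{*}$ arguments scale by scale, then sum. The starting point is that the global pointwise dispersion estimate of \cite{Annals} has an unavoidable $h^{-1/4}$ loss relative to \eqref{disprd}, but this worst case is saturated only in a narrow region of phase space (the tangential/gliding regime). Away from that region the solution behaves essentially as a free wave, and within it a refined analysis of the parametrix yields scale-dependent dispersion bounds that beat the naive exponent.

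Concretely, introduce a second semiclassical cutoff in the normal frequency variable and split $\chi(hD_{t})u=\sum_{a} u_{a}$ dyadically across a parameter $a\in[h^{2/3},1]$ measuring the degree of tangentiality, treating $a\lesssim h^{2/3}$ separately as a pure whispering-gallery contribution. On each piece $u_{a}$ the parametrix of \cite{Annals} reduces the evolution to a controlled superposition of WKB branches associated with successive reflections, each regularized by an Airy factor at its caustic. A direct stationary-phase/Airy analysis of these integrals produces for each $a$ a refined semiclassical dispersion bound in which the $1/4$ loss of \cite{Annals} is replaced by an effective $a$-dependent loss, and applying Keel-Tao on each piece yields Strichartz inequalities for $u_{a}$ with explicit $a$-dependent constants.

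The next step is to reassemble the pieces. Since distinct tangential scales are essentially disjoint in phase space, an almost-orthogonal square-function argument controls $\|u\|_{L^{q}_{t}L^{r}_{x}}$ in terms of the norms of the $u_{a}$. Optimizing the resulting dyadic inequality over the trade-off between the per-scale Strichartz gain and the price of summation, and then interpolating with energy conservation, produces the admissibility range \eqref{adm-1/9autrepapier} and in particular the endpoint $q=36/7$ at $r=\infty$.

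The hardest part is the sharp tracking of constants across the scales $a$. The exponent $1/9$ arises as the solution of the optimization where the best scale-by-scale Strichartz estimate is balanced against the cost of the summation; any improvement would require a genuine bilinear orthogonality inside each $a$-shell, exploiting the fine structure of individual caustics rather than treating the WKB superposition as a single object. This is consistent with the gap still remaining between \eqref{adm-1/12} from Theorem \ref{thm-1} and \eqref{adm-1/9autrepapier}.
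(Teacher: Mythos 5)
The paper itself does not prove this statement: it is imported verbatim from \cite{ILP3} as the known positive counterpart to the negative results that are the actual subject of the article. Your proposal must therefore stand on its own, and as written it is a research programme rather than a proof. The overall shape is reasonable and consistent with what \cite{ILP3} actually does (the remark following the theorem in the paper confirms that the argument there localizes in the conserved quantity $x+(-\partial_{y}^{2})^{-1}(-\partial_{x}^{2})\sim a$, i.e.\ in your tangentiality parameter), but every quantitative step is asserted rather than derived. The entire content of the theorem is concentrated in your sentence claiming that ``a direct stationary-phase/Airy analysis \dots produces for each $a$ a refined semiclassical dispersion bound in which the $1/4$ loss is replaced by an effective $a$-dependent loss'': you never state what that bound is, so the subsequent ``optimization'' that is supposed to output the exponent $1/9$ cannot be performed, and the number $1/9$ (equivalently the endpoint $q=36/7$) is never actually computed.

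There is also a structural reason why the scheme as described cannot work without an additional idea. A fixed-time dispersion estimate at scale $a$ fed into Keel--Tao yields only the Strichartz range dictated by the worst-case pointwise decay at that scale; but within a time interval of length $O(1)$ a wave packet at distance $a$ from the boundary undergoes $\sim a^{-1/2}$ reflections, and near each caustic the decay genuinely degrades (this is exactly what the counterexamples of the present paper exploit). The gain to $1/9$ in \cite{ILP3} comes from the fact that the degraded decay occurs only on exceptional, well-separated time intervals, so one must combine the dispersion analysis with a summation over the intervals between consecutive reflections (or an equivalent space-time, rather than fixed-time, argument) before interpolating. Your proposal treats time uniformly at each scale $a$ and attributes the whole improvement to the $a$-decomposition and its resummation; that step, as stated, would fail to beat the $1/4$ loss of \eqref{adm-1/4} in the worst shell. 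To turn the sketch into a proof you would need to (i) state and prove the precise $a$- and $t$-dependent decay of the parametrix of \cite{Annals}, including near the caustics, and (ii) carry out the double summation over reflections and over dyadic $a$, exhibiting $1/9$ as the explicit output of that computation.
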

A gap remains between negative results ($1/10$ in \eqref{adm-1/12}) and positive results ($1/9$ in \eqref{adm-1/9autrepapier}).
\begin{rmq} Besides the full Laplacian, both $-\partial_{y}^{2}$ and $x+(-\partial^{2}_{y})^{-1}(-\partial_{x}^{2})$ commute with the wave flow. In \cite{ILP3} we obtain that, whenever the data is moreover restricted to $|\partial_{y}|\sim h^{-1}$ and $x+(-\partial^{2}_{y})^{-1}(-\partial_{x}^{2})\sim h^{1/3}$, then Strichartz estimates hold for $q>5$. Hence, in this region of phase space, Theorem \ref{thm-1} is optimal except for the endpoint $q=5$.
\end{rmq}
Counterexamples in \cite{doi} were constructed by carefully
propagating a cusp starting in a suitable position around $(a,0)\in \Omega_{2}$, with $a\sim
h^{1/2}$. Here we start with a smoothed out cusp, which may be seen as a wave packet around $a\sim h^{1/3}$
and let it propagate, estimating the resulting solution with the
parametrix and proving it saturates the bound with a set of exponents satisfying \eqref{adm-1/12}. Our special solution may be seen as a sum of consecutive wave reflections, and at any given point in space-time we see at most one of these waves. Each wave has its peak around a specific location related to the number of reflections, and we can estimate the area (in $(x,y)$) where the amplitude of the wave remains close to its peak value, allowing to lower bound any of its physical Lebesgue norms. The time norm is then estimated taking advantage of the separation between any two different wave reflections.

From the 2D construction, we can easily follow the strategy from \cite{doi2}, and construct a good approximate $d-$dimensional wave by tensor product: retain our 2D wave in a given spatial tangential direction and multiply by a Gaussian of width $h^{1/2}$ in all other tangential directions. Such a wave packet will then provide a special solution that saturates some $d-$dimensional estimates. However, it turns out that we do not recover better counterexamples than the ones from \cite{doi2}: in fact, we recover the exact same set of exponents, albeit for a slightly different class of examples. As such we state the result and its proof for the sake of completeness as well as providing a much simpler argument than both \cite{doi,doi2}.
\begin{thm}
  \label{thm-2}
For $d=3,4,5$, Strichartz estimates \eqref{stricrd} may hold true on the domain $(\Omega_{d},g_{F})$ only if possible pairs $(q,r)$ are 
such that 
\begin{equation}\label{adm-1/10d} 
\frac 1q\leq \left(\frac{d-1} 2 -\frac {1-4/r}{12-24/r}\right)\left(\frac{1}{2}-\frac{1}{r}\right)\,.
\end{equation}
\end{thm}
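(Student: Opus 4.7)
The plan is to lift the two-dimensional counterexample from the proof of Theorem \ref{thm-1} to dimension $d$ by tensoring with a Gaussian wave packet in the extra tangential variables $y' = (y_2, \ldots, y_{d-1})$, as in the strategy of \cite{doi2}. Setting
\begin{equation*}
u_d(t, x, y_1, y') := u_{2D}(t, x, y_1) \, G_h(y'), \qquad G_h(y') = h^{-(d-2)/4} e^{-|y'|^2/h},
\end{equation*}
where $u_{2D}$ is the 2D special solution of Theorem \ref{thm-1} (localized at tangential frequency $|\partial_{y_1}| \sim h^{-1}$) and $G_h$ is an $L^2$-normalized Gaussian of spatial width $h^{1/2}$ and Fourier width $h^{-1/2}$. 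The Friedlander Laplacian $\Delta_F = \partial_x^2 + (1+x)\Delta_y$ commutes with the tangential Fourier transform in $y'$, which makes this ansatz especially convenient.

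The first step is to check that $u_d$ is an approximate solution of the Dirichlet wave equation on $(\Omega_d, g_F)$. Taking the $y'$-Fourier transform gives, for each mode $\eta'$, a 2D wave equation with an extra potential $(1+x)|\eta'|^2$. Since $\hat{G}_h$ is concentrated on $|\eta'| \lesssim h^{-1/2}$, this potential is of size $O(h^{-1})$, to be compared with the principal 2D symbol of size $h^{-2}$. A Taylor expansion of $\sqrt{\xi^2 + (1+x)(\eta_1^2 + |\eta'|^2)}$ around the 2D symbol yields a bounded phase correction in $y'$ together with an $O(th)$ correction in $t$: both are harmless on the $O(1)$ time scale of the construction, and the boundary condition on $x = 0$ is trivially preserved by the product structure.

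The norms of $u_d$ then factorize. Using $\|G_h\|_{L^r_{y'}} \sim h^{(d-2)(1/(2r) - 1/4)} \|G_h\|_{L^2_{y'}}$ one obtains
\begin{equation*}
\|u_d(0, \cdot)\|_{L^2_{x, y}} \sim \|u_{2D}(0, \cdot)\|_{L^2_{x, y_1}}, \qquad \|u_d\|_{L^q_t L^r_{x, y}} \sim h^{(d-2)(1/(2r) - 1/4)} \|u_{2D}\|_{L^q_t L^r_{x, y_1}}.
\end{equation*}
Plugging into the semiclassical Strichartz estimate \eqref{stricrd} with $\beta_d = d(1/2 - 1/r) - 1/q$ and using the sharp 2D lower bound for $\|u_{2D}\|_{L^q L^r}/\|u_{2D}(0)\|_{L^2}$ arising from the construction of Theorem \ref{thm-1}, one identifies the pairs $(q,r)$ for which the $d$-dimensional Strichartz estimate necessarily fails. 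The resulting condition is \eqref{adm-1/10d}; a short computation confirms that it coincides with the line from \cite{doi2} connecting the free endpoint at $r = 4$ to the $\frac{1}{12}$ endpoint at $r = \infty$, and is strictly more restrictive than the free admissibility only for $r > 4$ in the stated range $d \in \{3, 4, 5\}$.

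The main technical step is the verification of the approximate solution property together with the control of the error terms introduced by the $y'$-phase correction, so that they do not spoil the lower bound on $\|u_d\|_{L^q L^r}$. This is standard wave-packet bookkeeping and is made transparent by the clean decoupling between the Gaussian $y'$-dynamics and the intricate caustic structure in $(x, y_1)$ that drives the 2D saturation; it is precisely this decoupling that makes the present argument substantially simpler than those of \cite{doi, doi2}.
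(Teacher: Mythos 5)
Your overall strategy --- tensoring the 2D solution with an $L^2$-normalized transverse bump of width $h^{1/2}$, in the spirit of Knapp and of \cite{doi2} --- is the one used in the paper. But there is a genuine gap in the choice of the 2D input. You propose to feed in ``the sharp 2D lower bound arising from the construction of Theorem \ref{thm-1}'', i.e.\ the regime $a\sim h^{1/3}$, $M\sim\lambda^{1/3}$ that produces the $1/10$ condition \eqref{adm-1/12}. That regime gives nothing useful in higher dimensions: its wave packet attains its peak only on time subintervals of length $\lambda^{-1/3}$, so the resulting $L^q_t$ lower bound is too weak to beat the transverse factor $h^{(d-2)(1/(2r)-1/4)}$, and the ``short computation'' does not yield \eqref{adm-1/10d}; the paper devotes a remark to precisely this failure. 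To obtain \eqref{adm-1/10d} one must instead use the other parameter regime, $a\sim h^{1/2-\varepsilon}$ with $M\sim M_a\sim\lambda$ (so $\lambda\sim h^{-1/4-3\varepsilon/2}$), in which the 2D packet maintains its peak on the whole time interval; this is the regime that in two dimensions only reproduces the weaker condition \eqref{eq:doi2d} of \cite{doi}, and it is why the loss in \eqref{adm-1/10d} interpolates between $0$ at $r=4$ and $1/12$ at $r=\infty$ rather than being built from $1/10$. Your observation that the answer coincides with the line from \cite{doi2} is correct, but the input you specify does not produce that line.

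A second, lesser issue is that the approximate-solution step is asserted rather than proved, and it is not as innocuous as you suggest: for $|\eta'|\sim h^{-1/2}$ the correction to the dispersion relation is $|\eta'|^2/(2\sqrt{\lambda_k(\theta)})=O(1)$, so over times $t\sim 1$ the transverse modes acquire an $O(1)$ relative phase, and one must show this does not degrade the quantitative lower bound on $\|u_d\|_{L^qL^r}$. The paper sidesteps this entirely: write $v=u\,\phi_h+w$ with $w(0)=0$, compute the exact source term $(\partial_t^2-\Delta_F)w=\frac1h\, u\,(1+x)\tilde\phi_h(y')$, and bound $w$ by the Duhamel formula combined with the very Strichartz estimate being tested, which gives $h^{\beta}\|\chi(hD_t)w\|_{L^q L^r}\lesssim T\|u(0)\|_{L^2}$. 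This is both rigorous and simpler than a symbol expansion, and you should adopt it (or else supply the missing phase-correction analysis).
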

Note that we get the same dimension restriction out of necessity: we have an additional condition $r\geq 4$ that restricts meaningful ranges to lower dimensions.

Finally, we comment on dealing with only a model case: Theorem \ref{thm-1} should be seen as a better version of the results from \cite{doi}. Counterexamples from \cite{doi} do not directly provide counterexamples for a generic convex domain, and it required further treatment in \cite{doi2}. We believe that the present construction is a lot simpler than that of \cite{doi}, mostly thanks to the use of the exact parametrix from \cite{ILP3}. As such, constructing a generic counterexample will be easier, using in turn the parametrix obtained in \cite{ILLP} and following the present work as a blueprint. In fact, we suggest to any interested reader to start with the present paper, followed by \cite{ILP3}, \cite{ILLP} and only afterward, if inclined to, \cite{doi}, \cite{doi2} and \cite{Annals}.

In the remaining of the paper, $A\lesssim B$ means that there exists a constant $C$ such that $A\leq CB$ and this constant may change from line to line but is independent of all parameters. It will be explicit when (very occasionally) needed. Similarly, $A\sim B$ means both $A\lesssim B$ and $B\lesssim A$.

\subsection*{Acknowledments}
The authors thank all referees for their careful reading and constructive remarks and suggestions.
\section{The half-wave propagator: spectral analysis and parametrix construction}

\subsection{Digression on Airy functions}
Before dealing with the Friedlander model, we recall a few notations, where $Ai$ denotes the standard Airy function (see e.g. \cite{AFbook} for well-known properties of the Airy function), $  Ai(x)=\frac 1 {2\pi } \int_{\R} e^{ i  (\frac{\sigma^{3}}{3}+\sigma x)} \,d\sigma$: define
\begin{equation}
  \label{eq:Apm}
  A_\pm(z)=e^{\mp i\pi/3} Ai(e^{\mp i\pi/3} z)=-e^{\pm 2i\pi/3} Ai(e^{\pm 2i\pi/3} (-z))\,,\,\,\text{ for } \,
  z\in \mathbb{C}\,,
\end{equation}
then one checks that $Ai(-z)=A_+(z)+A_-(z)$ (see \cite[(2.3)]{AFbook}). The next lemma is proved in the Appendix and requires the classical notion of asymptotic expansion: a function $f(w)$ admits an asymptotic expansion for $w\rightarrow 0$ when there exists a (unique) sequence $(c_{n})_{n}$ such that, for any $n$, $\lim_{w\rightarrow 0} w^{-(n+1)}(f(w)-\sum_{0}^{n} c_{n} w^{n})=c_{n+1}$. We will denote $ f(w)\sim_{w} \sum_{n} c_{n} w^{n}$.
\begin{lemma}
  \label{lemL}
Define 
 $L(\omega)=\pi+i\log \frac{A_-(\omega)}{A_+(\omega)}$ 
for $\omega \in \R$,
then $L$ is real analytic and strictly increasing. We also
have
\begin{equation}
  \label{eq:propL}
  L(0)=\pi/3\,,\,\,\lim_{\omega\rightarrow -\infty} L(\omega)=0\,,\,\,
  L(\omega)=\frac 4 3 \omega^{\frac 3 2}+\frac{\pi}{2}-B(\omega^{\frac 3
    2})\,,\,\,\text{ for } \,\omega\geq 1\,,
\end{equation}
with the following asymptotic expansion for $B$, with $b_{1}>0$ and $(b_{k})_{k\geq 1}\in \R^{\N}$,
\begin{equation}
  \label{eq:B}
  B(u)\sim_{\frac 1u } \sum_{k\geq 1} b_k u^{-k}\,.
\end{equation}
Finally, let $\{-\omega_k\}_{k\geq 1}$ denote the zeros of the Airy function in decreasing order,
\begin{equation}
  \label{eq:propL2}
  L(\omega_k)=2\pi k\,,\,\, 
  L'(\omega_k)=2\pi \int_0^\infty Ai^2(x-\omega_k) \,dx\,.
\end{equation}
\end{lemma}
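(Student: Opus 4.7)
The plan is to address each claim in turn, using throughout the symmetry $A_-(\omega)=\overline{A_+(\omega)}$ for real $\omega$ (immediate from $\overline{Ai(\bar z)}=Ai(z)$) and the fact that $A_+$ has no real zeros (the zeros of $Ai$ are the real negatives $-\omega_k$, so $e^{-i\pi/3}\omega=-\omega_k$ has no real solution). These imply $|A_-/A_+|=1$ and $A_+(\omega)A_-(\omega)=|A_+(\omega)|^2>0$ on $\R$, so $L$ takes real values and is real-analytic on the continuous branch of $\log$ pinned by its value at $0$. The identity $L(0)=\pi/3$ is immediate: $A_\pm(0)=e^{\mp i\pi/3}Ai(0)$, so $A_-(0)/A_+(0)=e^{2i\pi/3}$ and $L(0)=\pi-2\pi/3=\pi/3$.

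To establish strict monotonicity I would differentiate: $L'(\omega)=iW(A_+,A_-)/|A_+(\omega)|^2$, where the Wronskian is constant since both $A_\pm$ satisfy $u''(z)+zu(z)=0$ (a direct check from $Ai''(z)=zAi(z)$). Evaluating at $z=0$ yields $W=i\sqrt{3}\,Ai(0)Ai'(0)$, hence $L'(\omega)=-\sqrt{3}\,Ai(0)Ai'(0)/|A_+(\omega)|^2>0$ thanks to $Ai(0)>0$ and $Ai'(0)<0$.

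For the asymptotics I would use the identities $A_\pm(\omega)=\tfrac{1}{2}[Ai(-\omega)\mp iBi(-\omega)]$ (the second coming from the definition of $Bi$ as a combination of rotated $Ai$). As $\omega\to-\infty$, setting $r=-\omega$, $Ai(r)$ decays while $Bi(r)$ grows exponentially, so $A_-/A_+=-1+2iAi(r)/Bi(r)+O(e^{-\frac{8}{3}r^{3/2}})$, and the continuous branch of $\log$ forces $L(\omega)\sim 2Ai(r)/Bi(r)\to 0^+$. As $\omega\to+\infty$, I would plug the full oscillatory asymptotic $Ai(z)\sim \frac{e^{-\zeta}}{2\sqrt{\pi}z^{1/4}}\sum_{k\geq 0}(-1)^k u_k\zeta^{-k}$ (with $\zeta=-\tfrac{2i}{3}\omega^{3/2}$ for $A_+$) into the definition, take the ratio with the complex conjugate, and extract the argument, obtaining $L(\omega)=\tfrac{4}{3}\omega^{3/2}+\pi/2-B(\omega^{3/2})$ with $B(u)=-2\arg S(\omega)$ and $S(\omega)=\sum_k u_k(-i)^k(3/2)^k\omega^{-3k/2}$; this yields an asymptotic series in $1/u$ with $b_1=3u_1=5/24>0$ (since $u_1=5/72$).

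At a zero $\omega_k$, $A_+(\omega_k)+A_-(\omega_k)=Ai(-\omega_k)=0$ gives $A_-/A_+=-1$, so $L(\omega_k)\in 2\pi\Z$; combining strict monotonicity, $L(0)=\pi/3$, and the Airy-zero asymptotic $\omega_k^{3/2}\sim\tfrac{3\pi(4k-1)}{8}$ forces $L(\omega_k)=2\pi k$. For the derivative, $A_-(\omega_k)=-A_+(\omega_k)$ makes $A_+(\omega_k)$ purely imaginary, and differentiating $A_++A_-=Ai(-\cdot)$ at $\omega_k$ gives $2\mathrm{Re}\,A_+'(\omega_k)=-Ai'(-\omega_k)$; from the Wronskian one reads $|A_+(\omega_k)|^2=3Ai(0)^2Ai'(0)^2/Ai'(-\omega_k)^2$, and combined with the reflection identity $Ai(0)Ai'(0)=-1/(2\pi\sqrt{3})$ (from $\Gamma(1/3)\Gamma(2/3)=2\pi/\sqrt{3}$) this simplifies $L'(\omega_k)$ to $2\pi Ai'(-\omega_k)^2$. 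Finally, $\int_0^\infty Ai^2(x-\omega_k)\,dx=Ai'(-\omega_k)^2$ follows by integrating the ODE-derived identity $\frac{d}{dx}[Ai'(x-\omega_k)^2-(x-\omega_k)Ai^2(x-\omega_k)]=-Ai^2(x-\omega_k)$ over $[0,\infty)$, the upper endpoint vanishing by exponential decay and the lower contributing $Ai'(-\omega_k)^2$ since $Ai(-\omega_k)=0$. The main delicate points will be tracking the branch of $\log$ to pin down $L\to 0^+$ rather than some other multiple of $2\pi$, and checking the sign of $b_1$ against the classical Airy expansion.
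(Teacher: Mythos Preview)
Your proof is correct and parallels the paper's argument closely, with a few organizational differences worth noting. The paper writes $A_+=\rho e^{i\theta}$, $A_-=\rho e^{-i\theta}$, so that $L=\pi+2\theta$, and then invokes the amplitude--phase identity $(2\rho)^2\theta'=1/\pi$ from \cite{AFbook} to obtain $L'=1/(2\pi\rho^2)>0$; your Wronskian computation is exactly the same identity repackaged, since $W(A_+,A_-)=-2i\rho^2\theta'$ and your value $iW=-\sqrt{3}\,Ai(0)Ai'(0)=1/(2\pi)$ matches. For the large-$\omega$ asymptotics the paper simply quotes the expansion of $\theta$ from \cite{AFbook}, whereas you derive it from the standard Airy series; both give $b_1=5/24$. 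Your treatment of the limit $L(\omega)\to 0^+$ as $\omega\to-\infty$ via $A_\pm=\tfrac12(Ai(-\omega)\mp iBi(-\omega))$ is more explicit than the paper, which only asserts that $L$ is a diffeomorphism onto $(0,\infty)$. For $L(\omega_k)=2\pi k$ the paper uses this diffeomorphism directly (the zeros of $Ai(-\cdot)$ are precisely where $\cos\theta=0$, hence $L\in 2\pi\Z$, and bijectivity onto $(0,\infty)$ forces the labelling), while you combine monotonicity with the classical asymptotic $\omega_k^{3/2}\sim 3\pi(4k-1)/8$; either way works. Finally, your computation of $L'(\omega_k)=2\pi Ai'(-\omega_k)^2$ via the Wronskian and the reflection value $Ai(0)Ai'(0)=-1/(2\pi\sqrt{3})$ agrees with the paper's route through $A'(\omega_k)^2=4\rho^2\theta'^2(\omega_k)=\theta'(\omega_k)/\pi$, and the integral identity is obtained by the same integration by parts in both.
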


\subsection{Spectral analysis of the Friedlander model}

Recall $\Omega_2=\{(x,y)\in\mathbb{R}^2|, x>0,y\in\mathbb{R}\}$ and
$\Delta_F=\partial^{2}_{x}+(1+x)\partial^{2}_{y}$ with Dirichlet boundary condition. After a Fourier transform in the $y$ variable, the operator $-\Delta_{F}$ is now
$-\partial^2_x+(1+x)\theta^2$. For  $\theta\neq 0$, this operator is a positive self-adjoint operator 
on $L^2(\mathbb{R}_+)$, with compact resolvent and we have explicit eigenfunctions and eigenvalues (the proof of the next lemma is, again, postponed to the Appendix):

\begin{lemma}\label{lemorthog}
There exist orthonormal eigenfunctions $\{e_k(x,\theta)\}_{k\geq 0}$ with their corresponding eigenvalues $\lambda_k(\theta)=|\theta|^2+\omega_k|\theta|^{4/3}$, which form an Hilbert basis of $L^{2}(\mathbb{R}_{+})$.  These eigenfunctions have an explicit form
\begin{equation}\label{eig_k}
 e_k(x,\theta)=\frac{\sqrt{2\pi}|\theta|^{1/3}}{\sqrt{L'(\omega_k)}}
Ai\Big(|\theta|^{2/3}x-\omega_k\Big),
\end{equation}
where $L'(\omega_k)$ is given by \eqref{eq:propL2}, which yields
$\|e_k(.,\theta)\|_{L^2(\mathbb{R}_+)}=1$.
\end{lemma}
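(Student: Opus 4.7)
My plan is to diagonalize the one-dimensional operator $P_\theta=-\partial_x^2+(1+x)\theta^2$ directly by reducing the eigenvalue equation to Airy's equation, then invoke general spectral theory for the completeness statement.

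First, I would look for eigenfunctions by writing $P_\theta u=\lambda u$, i.e.\ $u''=((1+x)\theta^2-\lambda)u$, and trying the substitution $u(x)=f(\xi)$ with $\xi=|\theta|^{2/3}x-\omega$, where $\omega\in\mathbb{R}$ is a parameter to be chosen. A direct computation shows $f''(\xi)=\xi f(\xi)$ exactly when $\lambda=|\theta|^2+\omega|\theta|^{4/3}$. Hence any eigenfunction is a combination of $Ai(\xi)$ and $Bi(\xi)$. Since $Bi$ grows exponentially as $\xi\to+\infty$ whereas $Ai$ decays super-exponentially, the requirement $u\in L^2(\mathbb{R}_+)$ forces $u(x)=c\, Ai(|\theta|^{2/3}x-\omega)$. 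The Dirichlet condition $u(0)=0$ then imposes $Ai(-\omega)=0$, which by definition of the $\omega_k$ selects $\omega=\omega_k$ and gives exactly the spectrum $\lambda_k(\theta)=|\theta|^2+\omega_k|\theta|^{4/3}$.

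Next I would determine the normalization: the change of variables $\xi=|\theta|^{2/3}x-\omega_k$ yields
\begin{equation*}
\|Ai(|\theta|^{2/3}\cdot-\omega_k)\|_{L^2(\mathbb{R}_+)}^2=|\theta|^{-2/3}\int_0^\infty Ai^2(y-\omega_k)\,dy=\frac{L'(\omega_k)}{2\pi\,|\theta|^{2/3}},
\end{equation*}
thanks to the identity \eqref{eq:propL2}. Taking the inverse square root gives the constant appearing in \eqref{eig_k}. Orthogonality of the $e_k(\cdot,\theta)$ for distinct $k$ is automatic once $P_\theta$ is shown to be symmetric on the natural form domain.

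For the Hilbert basis (completeness) statement, I would invoke standard arguments for Sturm--Liouville/Schr\"odinger operators with a confining potential. The quadratic form $q_\theta(u)=\int_0^\infty(|u'|^2+(1+x)\theta^2|u|^2)\,dx$ is closed, densely defined and nonnegative on the form domain $\{u\in H^1_0(\mathbb{R}_+):\,x^{1/2}u\in L^2\}$. For $\theta\neq 0$, this form domain embeds compactly in $L^2(\mathbb{R}_+)$ (Rellich on bounded intervals combined with the decay forced by the $x\theta^2$ term at infinity), so the Friedrichs extension has compact resolvent, hence a purely discrete spectrum whose eigenfunctions form an orthonormal basis of $L^2(\mathbb{R}_+)$. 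Since our explicit $\{e_k(\cdot,\theta)\}$ exhaust all $L^2$ solutions of the eigenvalue equation with Dirichlet data at $0$, they must coincide with this basis.

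The main obstacle, albeit a mild one, is the compact embedding / completeness step: one has to justify that no spectrum is missed, which is clean only once the correct form domain and the self-adjointness (via Friedrichs) are set up carefully. The reduction to Airy, the identification of the eigenvalues, and the normalization are all routine once the scaling $\xi=|\theta|^{2/3}x-\omega$ is chosen.
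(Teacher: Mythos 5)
Your proof is correct, and its overall skeleton (reduce to Airy's equation by the scaling $\xi=|\theta|^{2/3}x-\omega$, read off $\lambda=|\theta|^2+\omega|\theta|^{4/3}$, normalize via the identity \eqref{eq:propL2}, and get completeness from the compact resolvent of the Friedrichs extension) matches the paper's. The one place you genuinely diverge is orthogonality: the paper proves $\langle e_k,e_j\rangle_{L^2(\mathbb{R}_+)}=0$ by a direct computation, using a classical Wronskian-type primitive for the product $Ai(x-\omega_k)Ai(x-\omega_j)$ (formula (3.53) of the Airy reference) and checking that the boundary terms vanish at $x=0$ (zeros of $Ai$) and $x=+\infty$ (decay); you instead deduce it abstractly from symmetry of $P_\theta$ together with simplicity of the eigenvalues. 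Both are valid; the explicit computation is self-contained at the level of special functions and requires no discussion of the form domain, while your abstract route is shorter but shifts the burden onto setting up self-adjointness carefully — which you must do anyway for completeness, so nothing is lost. A further point in your favor: you explicitly rule out the $Bi$ branch and thereby show the $e_k$ exhaust all Dirichlet eigenfunctions, a step the paper leaves implicit even though it is needed to conclude that the orthonormal family is actually a Hilbert basis.
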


In a classical way, for $a>0$, the Dirac distribution $\delta_{x=a}$ on $\mathbb{R}_+$ may be decomposed as
\[
 \delta_{x=a}=\sum_{k\geq 1} e_k(x,\theta)e_k(a,\theta).
\]
Then if we consider a data at time $t=s$ such that
$u_0(x,y)=\psi(hD_y)\delta_{x=a,y=b}$, where $h\in (0,1)$ is a small parameter and
 $\psi\in C^{\infty}_0([\frac 12,2])$, we can write the (localized in $\theta$) Green function associated to the half-wave operator on $\Omega_{2}$ as
 \begin{equation}
\label{greenfct} G^{\pm}_{h}((x,y,t),(a,b,s))=\sum_{k\geq 1}
\int_{\mathbb{R}}e^{\pm i(t-s)\sqrt{\lambda_k(\theta)}}
e^{i(y-b)\theta} 
 \psi(h\theta)e_k(x,\theta)e_k(a,\theta)d\theta\,.
 \end{equation}
\subsection{Airy-Poisson formula}
We briefly recall a variant of the Poisson summation formula, introduced to deal with a parametrix construction for the general case of a generic strictly convex domain in \cite{ILLP} and used in \cite{ILP3} to improve Strichartz estimates in the model case. It will turn out to be crucial to analyze the spectral sum defining $G^{\pm}_{h}$ and map it to a sum over reflections of waves.
\begin{lemma}
  \label{AiryPoisson}
  In $\mathcal{D}'(\R_\omega)$, one has
  \begin{equation}
    \label{eq:AiryPoisson}
        \sum_{N\in \Z} e^{-i NL(\omega)}= 2\pi \sum_{k\in \N^*} \frac 1
    {L'(\omega_k)} \delta(\omega-\omega_k)\,.
  \end{equation}
In other words, for $\phi(\omega)\in C_{0}^{\infty}$, 
  \begin{equation}
    \label{eq:AiryPoissonBis}
        \sum_{N\in \Z} \int e^{-i NL(\omega)} \phi(\omega)\,d\omega = 2\pi \sum_{k\in \N^*} \frac 1
    {L'(\omega_k)} \phi(\omega_k)\,.
  \end{equation}

\end{lemma}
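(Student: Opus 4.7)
The plan is to view \eqref{eq:AiryPoisson} as the pullback of the standard Poisson summation formula under the real-analytic diffeomorphism $\omega \mapsto L(\omega)$ supplied by Lemma \ref{lemL}. Since the two formulations \eqref{eq:AiryPoisson} and \eqref{eq:AiryPoissonBis} are equivalent by testing against $\phi \in C_0^\infty(\R_\omega)$, I would focus on proving \eqref{eq:AiryPoissonBis}.

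Fix $\phi \in C_0^\infty(\R)$ and perform the change of variable $\sigma = L(\omega)$. By Lemma \ref{lemL}, $L$ is a real analytic and strictly increasing bijection from $\R$ onto $(0,+\infty)$, with $L(\omega) \to 0$ as $\omega \to -\infty$ and $L(\omega) \to +\infty$ as $\omega \to +\infty$. Setting $\omega = L^{-1}(\sigma)$, we have
\begin{equation*}
\int_{\R} e^{-iNL(\omega)} \phi(\omega)\,d\omega = \int_0^{+\infty} e^{-iN\sigma}\, f(\sigma)\,d\sigma, \qquad f(\sigma) := \frac{\phi(L^{-1}(\sigma))}{L'(L^{-1}(\sigma))}.
\end{equation*}
Extending $f$ by zero to $\sigma \leq 0$ gives a function in $C_0^\infty(\R)$: the support of $\phi$ is a compact subset of $\R$, its image under $L$ is a compact subset of $(0,+\infty)$, and thus $f$ vanishes identically in a neighborhood of $0$ and for $\sigma$ large. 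Moreover $L'>0$ everywhere so no singularity arises.

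Now apply the standard Poisson summation formula to $f$,
\begin{equation*}
\sum_{N \in \Z} \int_{\R} e^{-iN\sigma} f(\sigma)\,d\sigma = 2\pi \sum_{k \in \Z} f(2\pi k),
\end{equation*}
which is justified as an identity in $\mathcal{D}'(\R_\sigma)$ (equivalently, $f \in \mathcal{S}(\R)$ makes the right-hand side absolutely convergent and the left-hand side must be interpreted in the sense of tempered distributions). Since $f$ vanishes on $(-\infty,0]$, only the terms with $k \geq 1$ survive. For such $k$, the relation $L(\omega_k) = 2\pi k$ from \eqref{eq:propL2} gives $L^{-1}(2\pi k) = \omega_k$, and therefore $f(2\pi k) = \phi(\omega_k)/L'(\omega_k)$. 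Summing over $N$ after the change of variable yields exactly \eqref{eq:AiryPoissonBis}.

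The only mildly delicate point is justifying that the sum $\sum_{N} e^{-iNL(\omega)}$ and the classical Poisson sum are equal as distributions on $\R$ rather than as formal series: this is handled by interpreting everything as tempered distributions and invoking continuity of pullback by the smooth diffeomorphism $L$, which is straightforward once one has observed that the compact support of $\phi$ is preserved (in $(0,+\infty)$) under $L$, so that no boundary issue at $\sigma = 0$ appears. The statement \eqref{eq:AiryPoisson} then follows by reading \eqref{eq:AiryPoissonBis} as the action of the left- and right-hand sides of \eqref{eq:AiryPoisson} on an arbitrary test function $\phi$.
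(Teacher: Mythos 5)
Your proposal is correct and follows essentially the same route as the paper's own proof: pull back the classical Poisson summation formula via the change of variable $\sigma=L(\omega)$, using that $L$ is an increasing diffeomorphism onto $(0,+\infty)$ so the transplanted test function extends by zero to a smooth compactly supported function, and then identify the surviving terms via $L(\omega_k)=2\pi k$. No substantive differences to report.
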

The Lemma is easily proved using the usual Poisson summation formula
followed by the change of variable $x=L(\omega)$ and we provide details in the Appendix.

\section{Counterexamples}
As recalled in the introduction, counterexamples in \cite{doi} were constructed by carefully
propagating a cusp starting at a distance $a\sim h^{1/2}$ from the boundary. In this section, $a$ is a parameter to be optimized later on, which is to be thought as the distance between the boundary and the peak value of the data (and later, repeatedly in time, of the solution itself). Recall that a (2D) Strichartz estimate is
\begin{equation}
  \label{eq:31}
  \| u\|_{L^{q}([0,t_{0}],L^{r}(\Omega))}\lesssim h^{-\beta} \|u_{0}\|_{L^{2}(\Omega)}\,,
\end{equation}
where $\beta=d(1/2-1/r)-1/q$ with $d=2$ (scaling condition). We also define $\alpha$ to be such that $1/q=\alpha(1/2-1/r)$ and recall that in free space, $\alpha=(d-1)/2=1/2$.
\subsection{Rescaled variables}
Let $a$ be small enough, such that $ h^{2/3}\ll a \ll 1$. From our knowledge from the parametrix construction in \cite{Annals} (see also \cite{ILP3}), where the source point is $(x=a,y=0)$, we rescale as follows:  set $\lambda=a^{3/2}/h$ and let $M_a=a^{-1/2}$,
\begin{equation}
  \label{eq:38}
  t=a^{1/2} T\,,\,\, x=aX\,,\,\,y=-t\sqrt{1+a}+a^{3/2}Y\,,\,\, U(T,X,Y)=u(t,x,y)\,.
\end{equation}
If $F(X,Y)=f(aX,a^{3/2}Y-T\sqrt a \sqrt{1+a})$, then
\begin{equation}
  \label{eq:39}
  \| F(X,Y)\|_{L^{r}_{X>0,Y}}=a^{-5/(2r)} \|f\|_{L^{r}_{x>0,y}}
\end{equation}
and 
\begin{equation}
  \label{eq:40}
   \| U(T, X,Y)\|_{L^{q}([0,M_a],L^{r})}=a^{-1/(2q)-5/(2r)} \|u\|_{L^{q}(0,1;L^{r})}\,.
\end{equation}
Since $h=M_a^{-3} \lambda^{-1}$, in rescaled variables, \eqref{eq:31} becomes
\begin{equation}
  \label{eq:41}
  M_a^{-1/q-5/r} \| U\|_{L^{q}([0,M_a],L^{r})}\lesssim (\lambda M_a^{3})^{1-1/q-2/r} a^{5/4} \| U_{0}\|_{L^{2}}
\end{equation}
hence we are reduced to 
\begin{equation}
  \label{eq:41bis}
 \| U\|_{L^{q}([0,M_a],L^{r})}\lesssim \lambda^{1-1/q-2/r} M_a^{1/2-1/r-2/q} \| U_{0}\|_{L^{2}}\,.
\end{equation}

\subsection{Setup for the parametrix}
Let us consider our model equation,
\begin{equation}
  \label{eq:42}
(  \partial^{2}_{t} -(\partial^{2}_{x}+(1+x)\partial^{2}_{y})) u(t,x,y)=0 \text{ on } x\geq 0, \,y\in\mathbb{R}
\end{equation}
with Dirichlet boundary condition $u_{|x=0}=0$. We will seek solutions $u$ under the following form, where the Fourier variable $\theta$ associated to $y$ is rescaled with $\eta=h\theta$,
\begin{equation}
  \label{eq:43}
  u(t,x,y)=\frac 1{2\pi h} \int e^{i\frac \eta h y} v(t,x,\eta/h) \psi(\eta)\,d\eta\,,
\end{equation}
where $\psi\in C^{\infty}_{0}$, $\psi=1$ for $3/4\leq \eta\leq 3/2$ and $\psi=0$ outside $[\frac 12,2]$. Therefore, as a function of $y$, $u$ is band-limited and its Fourier variable $\theta\sim 1/h$. If we set $\hbar=h/\eta$ and  $v_{\hbar}(t,x)=v(t,x,1/\hbar)$, $v_{\hbar}$ is a solution to
 \begin{equation}
   \label{eq:44}
(   \hbar^{2}\partial^{2}_{t}+(-\hbar^{2}\partial^{2}_{x}+(1+x)) v_{\hbar}(t,x)=0 \text{ for } x\geq 0,
 \end{equation}
with $v_{\hbar|x=0}=0$. 
Recalling from Lemma \ref{lemorthog} that the eigenmodes are $e_{k}(x,\hbar^{-1})$ and using \eqref{eig_k}, we select a datum $v_{0}=v_{0}(x,a,1/\hbar)$ (to be suitably chosen later), decompose it over the eigenmodes and write the corresponding half-wave propagator, with an additional spectral cut-off $\chi_{0}(\omega_{k}) \chi_{1}(\omega_{k}\hbar^{2/3})$,
\begin{equation}
    \label{eq:46}
\begin{split}
  v_{\hbar}(t,x) & =        \begin{multlined}[t]                               \sum_{k\geq 1} e^{i\frac t \hbar (1+\omega_{k} \hbar^{\frac 23})^{\frac 1 2}}  \chi_{0}(\omega_{k} )\chi_{1}(\omega_{k}\hbar^{\frac 23}) e_{k} (x,\hbar^{-1}) \\  \times \int_{{z}>0} e_{k}({z},\hbar^{-1}) v_{0}({z},a,\hbar^{-1}) \,d{z}
\end{multlined}\\
& =      \begin{multlined}[t] \sum_{k\geq 1} \frac{2\pi \hbar^{-\frac 23} }{L'(\omega_{k})} e^{i\frac t \hbar (1+\omega_{k} \hbar^{\frac 23})^{\frac 1 2}} \chi_{0}(\omega_{k})\chi_{1}(\omega_{k}\hbar^{\frac 23}) Ai(\hbar^{-\frac 23} x-\omega_{k})\\ \times \int_{{z}>0} Ai(\hbar^{-\frac 23} {z}-\omega_{k}) v_{0}({z},a,\hbar^{-1}) \,d{z}\,.\end{multlined}
\end{split}
\end{equation}
It turns out to be convenient to localize $v_{\hbar}$ with respect to the Laplacian. Recall that $\sqrt{-\hbar^{2}\partial_{x}^{2}+(1+x)} e_{k}(x,\hbar^{-1})=\sqrt{1+\omega_{k} \hbar^{2/3}} e_{k}(x,\hbar^{-1})$), which explains why we added a spectral cut-off $\chi_{1}(\omega_{k}\hbar^{2/3})$, with $\chi_{1}(\zeta)=0$ for $\zeta<-1$ and $\zeta>2$, $\chi_{1}(\zeta)=1$ for $0<\zeta<1$. We also insert $\chi_{0}(\omega)=1$ for $\omega>2$, $\chi_{0}(\omega)=0$ for $\omega<1$: obviously $\chi_{0}(\omega_{k})=1$ for all $k$, as $\omega_{1}>2$. With both cut-offs, the sum over $k$ in \eqref{eq:46} is reduced to a finite sum $k \lesssim h^{-1}$, owning to the asymptotics of the zeroes of the Airy function, which are strictly positive and behave like $k^{2/3}$ for large $k$. Alternatively we may use the Green function formula \eqref{greenfct} and apply it to our datum $v_{0}$ (after inserting the same spectral cut-off in the Green function). We point out that our choice of $+$ sign in the half-wave propagator is arbitrary and does not play any important role beside setting a direction of propagation (to the left of the $x$ axis in the upper plane) when returning to $U(t,x,y)$. 

Using the Airy-Poisson formula \eqref{eq:AiryPoissonBis}, we transform the sum of eigenmodes (over $k$) into a sum over $N\in \Z$; its summands will be later seen to be waves corresponding to the number of reflections on the boundary, indexed by $N$ :
\begin{multline}
  v(t,x,\hbar^{-1})= \sum_{N\in \Z} \int_{\R}\int_{{z}>0} e^{-i NL(\omega)} \hbar^{-2/3} e^{i\frac t \hbar (1+\omega \hbar^{2/3})^{\frac 1 2}} \chi_{0}(\omega)\chi_{1}(\hbar^{2/3}\omega)\\
 Ai (\hbar^{-2/3} x-\omega) Ai(h^{-2/3}{z}-\omega) v_{0}({z},a,1/\hbar) \,d{z} d\omega\,.
\end{multline}
Recall that
\begin{equation}
  \label{eq:47}
  Ai(\hbar^{-2/3} x-\omega)=\frac 1 {2\pi \hbar^{1/3} } \int e^{\frac i \hbar (\frac{\sigma^{3}}{3}+\sigma(x-\hbar^{2/3}\omega))} \,d\sigma\,.
\end{equation}
If we rescale with $\zeta=\hbar^{2/3} \omega$, we get
\begin{equation}
  \label{eq:48}
    v(t,x,\hbar^{-1})= \frac 1 {(2\pi\hbar)^{2}} \sum_{N\in \Z} \int_{\R^{3}}\int_{{z}>0} e^{\frac i \hbar  \tilde \Phi_{N}}   \chi_{0}(\hbar^{-\frac 23}\zeta)\chi_{1}(\zeta) v_{0}({z},a,\hbar^{-1})  \, dz ds d\sigma d{\zeta}\,,
\end{equation}
where
\begin{equation}
  \label{eq:49}
    \tilde \Phi_{N}=\frac{\sigma^{3}} 3+\sigma(x-\zeta)+\frac {s^{3}} 3+s({z}-\zeta)-N\hbar L(\hbar^{-2/3} \zeta)+t \sqrt{1+\zeta}
\end{equation}
and therefore, with $\Phi_{N}=\tilde\Phi_{N}+y$, we find
\begin{equation*}
  u(t,x,y)= \frac 1 {(2\pi)^{3}h} \sum_{N\in \Z} \int_{\R^{4}}\int_{{z}>0}e^{i \frac \eta h  \Phi_{N}}  \chi_{0}\Bigl(\frac {\zeta} {\hbar^{\frac 23}}\Bigr)\chi_{1}(\zeta) 
  \frac {\eta^{2}} {h^{2}}{\psi(\eta)} v_{0}\Bigl({z},a,\frac \eta h\Bigr) \, d z ds d\sigma  d\zeta d\eta\,.
\end{equation*}
Let us rescale now like we did in \eqref{eq:38}, $t=a^{1/2} T$, $x=aX$, $y=-t\sqrt{1+a}+a^{3/2}Y$, with moreover
\begin{equation}
  \label{eq:50}
  \zeta=a E\,,\,\,s=a^{1/2}S\,,\,\, \sigma =a^{1/2} \Sigma\,,\,\, {z}=a {Z},
\end{equation}
then $u(t,x,y)$ becomes $U(T,X,Y)$ where, for $\lambda=a^{3/2}/h$ as before, we have
\begin{equation}
  \label{eq:51}  
\begin{split}
  U(T,X,Y)=  \frac {\lambda^{2}} {(2\pi)^{3}h} \sum_{N\in \Z} \int \int_{Z>0}  & e^{i\lambda \eta (Y+\Psi_{N})}V_{0}({Z},\lambda \eta) \\
   & {\quad {}\times
\chi_{\lambda}^{a}(E) \eta^{2}  \psi(\eta) \, dZ dS d\Sigma dE d\eta\,,}
\end{split}
\end{equation}
where $\chi_{\lambda}^{a}(E)=\chi_{0}(\eta^{2/3} \lambda^{2/3} E)\chi_{1}(a E)$. Here the phase function is given by 
\begin{equation}
  \label{eq:52}
  \Psi_{N}=\frac{\Sigma^{3}}3 +\Sigma(X-E)+\frac {S^{3}} 3+S({Z}-E)-\frac N{\lambda \eta} L((\lambda \eta)^{\frac 2 3} E)+\frac { T(E-1)}{\sqrt {1+a E}+\sqrt{1+a}}\,.
\end{equation}
The last term comes from the time propagator and takes into account the change of variable in $y$ that includes a time translation.

We conclude this introduction to the parametrix with an important lemma, in effect reducing the sum over $N$ in \eqref{eq:51} to a finite sum (with a very large number or terms).
\begin{lemma}\label{sumNfinie}
 Assume $V_{0}$ is a smooth function and $V_{0}(Z,\mu)\in L^{\infty}_{\mu}L^{1}_{Z}$. In the sum defining $U(T,X,Y)$ in \eqref{eq:51}, the only significant contributions arise from $N$'s such that $|N|\lesssim h^{-1/3}$.
\end{lemma}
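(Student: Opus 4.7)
The plan is to show that the contribution from $|N|\gtrsim h^{-1/3}$ is negligible via non-stationary phase in the $E$ variable. The only $N$-dependence of $\Psi_N$ is in the term $-\frac{N}{\lambda\eta}L((\lambda\eta)^{2/3}E)$, so
\[
\partial_E\Psi_N = -\Sigma - S - N(\lambda\eta)^{-1/3} L'\bigl((\lambda\eta)^{2/3}E\bigr) + T\,g_a(E),
\]
where $g_a(E)=\partial_E\bigl(\tfrac{E-1}{\sqrt{1+aE}+\sqrt{1+a}}\bigr)$ is smooth and bounded. On the support of $\chi_0(\eta^{2/3}\lambda^{2/3}E)$ one has $(\lambda\eta)^{2/3}E\geq 2$, hence by the asymptotic $L'(\omega)=2\sqrt\omega+O(\omega^{-1/2})$ coming from Lemma~\ref{lemL}, the $N$-dependent piece of $\partial_E\Psi_N$ equals $-2NE^{1/2}$ up to a small remainder. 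The plan is therefore to establish a lower bound $|\partial_E\Psi_N|\gtrsim |N|\sqrt E$ under an appropriate threshold on $|N|$ and then integrate by parts.

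To prepare for IBP in $E$, I would first localize $\Sigma$ and $S$ by non-stationary phase in those two variables. Since $\partial_\Sigma\Psi_N=\Sigma^2+X-E$ and $\partial_S\Psi_N=S^2+Z-E$ are both $N$-independent, repeated IBPs in $\Sigma$ and $S$ show that, up to $O((\lambda\eta)^{-\infty})$ errors, the integrand is effectively supported on the set where $|\Sigma|,|S|\lesssim 1+\sqrt{E}$. On that set one then has
\[
|\partial_E\Psi_N|\geq 2|N|\sqrt E - C\bigl(1+\sqrt E+T\bigr).
\]
Recalling $T\leq M_a$ and $E\geq c(\lambda\eta)^{-2/3}\gtrsim \lambda^{-2/3}$, so that $T/\sqrt E\lesssim M_a\lambda^{1/3}=h^{-1/3}$, one obtains $|\partial_E\Psi_N|\gtrsim |N|\sqrt E\gtrsim |N|\lambda^{-1/3}$ as soon as $|N|\geq C_0 h^{-1/3}$ for $C_0$ large enough, and $\partial_E\Psi_N$ has no critical point on the effective support of the integrand.

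It then remains to integrate by parts $K$ times in $E$; the boundary terms vanish thanks to the $\chi_0\chi_1$ cut-off. Each IBP produces a factor $(\lambda\eta\,\partial_E\Psi_N)^{-1}\lesssim (\lambda^{2/3}|N|)^{-1}$, while derivatives falling on the cut-off $\chi_0(\eta^{2/3}\lambda^{2/3}E)$ cost at most $\lambda^{2/3}$, yielding a net gain of $|N|^{-1}$ per such step. Using $V_0(Z,\mu)\in L^\infty_\mu L^1_Z$ to control the $Z$ integration and $\eta\sim 1$ from $\psi$, each summand is bounded by a constant times $|N|^{-K}$ multiplied by the $\lambda^2/h$ prefactor, and $\sum_{|N|\geq C_0 h^{-1/3}}|N|^{-K}\lesssim h^{(K-1)/3}$ easily beats this prefactor for $K$ large. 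The hard part will be the IBP bookkeeping: the cut-off $\chi_0$ brings a loss $\lambda^{2/3}$ each time a derivative hits it, and one must verify that the $|N|$-gain compensates this loss at every step, which is guaranteed precisely by the threshold $|N|\gtrsim h^{-1/3}$.
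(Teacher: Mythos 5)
Your strategy is essentially the paper's: localize $S$ and $\Sigma$ by non-stationary phase (their derivatives of $\Psi_N$ being $N$-independent), then observe that on the remaining region $\partial_E\Psi_N\approx T g_a(E)-S-\Sigma-2NE^{1/2}$ is dominated by the $-2NE^{1/2}$ term once $|N|\gg T/\sqrt{E}\lesssim M_a\lambda^{1/3}=h^{-1/3}$, and integrate by parts in $E$; the threshold computation, the role of $E\gtrsim\lambda^{-2/3}$, and the use of $V_0\in L^1_Z$ all match.

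There is one bookkeeping point you should fix. You localize to $|S|,|\Sigma|\lesssim 1+\sqrt{E}$ with an $N$-\emph{independent} threshold, and the resulting errors are only $O((\lambda\eta)^{-\infty})$ \emph{uniformly in} $N$: since the sum runs over all $N\in\Z$ (infinitely many terms), such errors are not summable, and on the complementary region $|\Sigma|\gg 1+\sqrt E$ you cannot fall back on the $E$-derivative either, because a large $\Sigma$ can cancel $-2N\sqrt{E}$ there. The paper avoids this by taking the thresholds $N$-dependent, namely $|S|,|\Sigma|\geq 3N^{1/100}E^{1/2}$, so that each integration by parts in $S$ or $\Sigma$ yields a factor $\lambda^{-1}|\partial_S\Psi_N|^{-1}\lesssim N^{-1/50}\lambda^{-1/3}$, which simultaneously gives $O(\lambda^{-\infty})$ and decay in $N$ sufficient to sum the tails. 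With that adjustment (and correspondingly $|S|+|\Sigma|\lesssim N^{1/100}\sqrt E\ll N\sqrt E$ on the inner region, which still leaves $-2N\sqrt E$ dominant), your argument closes.
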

\begin{proof}
We will rely on non stationary phase in either $E$, $S$ or $\Sigma$.  We have $\partial_{S}\Psi_{N}=S^{2}+Z-E$ and $\partial_{\Sigma}\Psi_{N}=\Sigma^{2}+X-E$. If either $|S|\geq 3 N^{1/100} E^{1/2} $ or $|\Sigma|\geq 3 N^{1/100}E^{1/2}$, integration by parts in one of these variables, say $S$, provides a factor $\lambda^{-1}|\partial_{S} \Psi|^{-1}\lesssim N^{-1/50}\lambda^{-1/3}$ using the lower bound on $E$ from its support. By non stationary phase, we get both enough decay to sum in $N$ and a bound  $\|V\|_{L^{1}_{Z}} O(\lambda^{-\infty})$ (the $(E,\eta)$ integral is bounded by support considerations and the $Z$ integral is bounded from $V_{0}\in L^{1}$). Using \eqref{eq:propL} to expand $L(\omega)$, 
\begin{multline*}
  \partial_{E}\Psi_{N}=\frac T {(\sqrt{1+aE}+\sqrt{1+a})}\frac{(1+aE+\sqrt{1+aE}\sqrt{1+a} +a/2)}{(1+aE +\sqrt{1+aE}\sqrt{1+a})} \\
  {}-S -\Sigma-2NE^{1/2}(1-\frac 3 4 B'(\lambda \eta E^{3/2}))
\end{multline*}
where the $B'$ term is small compared to $1$, if $\lambda E^{3/2}$ is sufficiently large ($>2$ is already enough). Note that the coefficient of $T$ is bounded from above and below by fixed constants, as $E>0$ and $aE\lesssim 1$. 
If $\sup(|S|,|\Sigma|)<3 N^{1/100}E^{1/2}$, then, for $|N|\geq 100$, $\Psi_{N}$ will not be stationary in $E$ provided that $|T|\lesssim (N-3N^{1/100}) E^{1/2} $ and non-stationary phase in $E$ provides, again, decay to sum in $N$ and an $O(\lambda^{-\infty})$ contribution. With the lower bound $E\gtrsim \lambda^{-2/3}$, the cardinal $\sharp N$ of the set of $N$'s that contribute is bounded by $|T| \lambda^{-1/3}$. As $T=t/a^{1/2}$, $\lambda=\frac{a^{3/2}}{h}$ and $a\gg h^{2/3}$, $\sharp N\lesssim \frac{h^{1/3}}{a}\ll h^{-1/3}$. Moreover, any $O(\lambda^{-\infty})$ is also an $O(h^{\infty})$. \end{proof}

\subsection{Choosing the initial data}
\label{sec:donnee}
We pick $v_0(z,a,\eta/h)$, which is now $V_0({Z},\lambda \eta)$, to be
\begin{equation}
  \label{eq:53}
  V_{0}({Z},\lambda \eta)=\int e^{i\lambda \eta(({Z}-1) s+\frac {s^{3}} 3 +\frac i 2 \frac {s^{2}} M)} \,ds\,.
\end{equation}
While we do not have $V_{0|{{Z}=0}}=0$, this will turn to be irrelevant for our purposes: the spectral cut-off $\chi_{1}$ insures that the datum $v_{\hbar}(0,x)$ is such that $v_{\hbar}(0,0)=0$ as a finite sum of Airy functions $Ai(-\omega_{k})$. Here $M$ is large and will be chosen later in this section, depending on $a,h$, while $\eta\sim 1$  through the $\psi(\eta)$ cut-off and therefore harmless. Defined in this way, $V_{0}$ is (microlocally) concentrated around $\{{Z}=1\}$ and the corresponding Fourier direction $\{ \Xi=0\}$:  we may explicitly compute $V_0$ as follows, with $\tilde \lambda=\lambda \eta$ and $\tau=\tilde\lambda^{1/3}s$ :
  \begin{align*}
\int e^{i\lambda \eta(({Z}-1) s+\frac {s^{3}} 3 +\frac i 2 \frac {s^{2}} M)} \,ds& =  \frac 1 {\tilde \lambda^{1/3}} \int e^{i(\frac {\tau^{3}} 3+ \tilde \lambda^{2/3}\tau({Z} -1)+ \frac i {2M} \tilde \lambda^{1/3} \tau^{2}) }\,d\tau\\
  &  =  \frac 1 {\tilde \lambda^{1/3}} \int e^{i\Big(\frac 13  (\tau+ \frac i {2M} \tilde \lambda^{1/3})^{ 3}+ \tilde \lambda^{2/3}\tau({Z} -1+\frac 1 {4M^{2}}) +\frac i 3 \tilde \lambda \frac 1 {(2M)^{3}}\Big)} \,d\tau\\
 & =  \frac 1 {\tilde \lambda^{1/3}}  e^{ \frac{\tilde \lambda}{2M} ({Z}-1+\frac 2 3 \frac 1 {4M^{2}})} 2\pi Ai\Big(\tilde \lambda^{2/3}({Z}-1+\frac 1 {4M^{2}})\Big)\,.
  \end{align*}
We select $1 \ll M\ll \lambda$: this will be our first condition on $M$. For $Z-1>1/10$, the exponential decay of $|Ai(z)|\sim \exp(-C z^{3/2})$ for large $z$  offsets the growth of the exponential factor in front of it, while for $Z-1<-1/10$, we get exponential decay in term of $\tilde \lambda /M$ from the front factor while $Ai$ is bounded. In particular, for ${Z}=0$, we get
\begin{equation}
  \label{eq:54}
\int e^{i\lambda \eta(({Z}-1) s+\frac {s^{3}} 3 +\frac i 2 \frac {s^{2}} M)} \,ds\Big|_{{Z}=0}=\frac 1 {\tilde \lambda^{1/3}}  e^{ -\frac{\tilde \lambda}{2M} (1-\frac 2 3 \frac 1 {4M^{2}})} 2\pi Ai\Big(\tilde \lambda^{2/3}(-1+\frac 1 {4M^{2}})\Big)\,,
\end{equation}
which is $O(\tilde \lambda^{-\infty})$ as $1-1/(6M^{2})>0$ and $\tilde \lambda /(2M) \geq \tilde \lambda^{\varepsilon}$ for a suitable (small) $\varepsilon>0$, provided that $1\ll M\ll \lambda$.  Moreover, with such a choice of $M$, we even have
\begin{equation}
  \label{eq:Zneg}
  V_{0|{Z}\leq 0}=\int e^{i\lambda \eta(({Z}-1) s+\frac {s^{3}} 3 +\frac i 2 \frac {s^{2}} M)} \,ds\Big|_{{Z}\leq 0}=O(\tilde \lambda^{-\infty})\,.
\end{equation}
We can then compute explicitely the Fourier transform of $V_0$, with  $1\ll M\ll \lambda $:
\begin{equation}
  \label{eq:55}
  \begin{split}
  \hat V_{0} (\tilde\lambda \xi,\tilde\lambda) & = \int e^{-i\tilde\lambda \xi {Z} }V_0({Z},\tilde\lambda)d{Z} \\
 &   =\int \int e^{i\tilde\lambda (s-\xi){Z}} e^{i\tilde\lambda (\frac {s^{3}} 3 -s +\frac i 2 \frac {s^{2}} M)} \,dZ ds \\
 & =\frac 1 {\tilde\lambda} e^{i\tilde\lambda (\frac{\xi^{3}}3-\xi+\frac {i \xi^{2}}{2M})}\,.
 \end{split}
\end{equation}
\subsection{$L^2$ norm of the initial data}
Define 
\[
u_{0}(x,y)=\frac 1h\int e^{i\frac{\eta}{h}y}v_0(x,a,\eta/h)\psi(\eta)d\eta,
\]
our initial data $u(0,x,y)$ will be the projection of $u_{0}$ over a finite number of spectral modes, through \eqref{eq:46}. By Bessel inequality, $\| u(0,\cdot)\|_{L^{2}(\Omega_{2})} \leq \| u_{0}\|_{L^{2}(\Omega_{2})}$, and using \eqref{eq:Zneg}, we have
\begin{equation}
  \label{eq:2}
 \|u_{0}\|_{L^{2}(\R^{2})}=\| u_{0}\|_{L^{2}(\Omega_{2})}+O(\lambda^{-\infty})\,.  
\end{equation}
 We therefore compute the Fourier transform of $u_{0}$, or its rescaled version: for $x=aX$, we set $v_0(x,a,\eta/h)=V_0(X,\lambda\eta)$ with $V_{0}$ defined by \eqref{eq:53}, and
\[
U_0(X,Y):=u(0,x,y)=\frac 1h \int e^{i\lambda \eta Y}V_0(X,\lambda \eta)\psi(\eta)d\eta.
\]
The Fourier transform of $U_0$ is obtained by a direct computation,
\begin{equation}\label{hatU0}\begin{split}
\hat{U_0}(\zeta,\lambda\eta) & =\int e^{-i\zeta X} e^{-i\lambda\eta Y}\frac 1h \int e^{i\lambda \tilde \eta Y}V_0(X,\lambda \tilde  \eta)\psi(\tilde\eta)d\tilde\eta dX dY\\
                 &  =\frac{1}{h\lambda}\int \delta_{\tilde\eta=\eta}\psi(\tilde\eta)\hat{V_0}(\zeta,\lambda\tilde\eta)d\tilde\eta\\
 & =\frac{1}{h\lambda}\psi(\eta)\hat{V_0}(\zeta,\lambda\eta).
\end{split}\end{equation}
We now estimate the $L^2$ norm of $U_0$, using the explicit form of $\hat V_{0}$ we already obtained:
\begin{equation}
    \label{eq:56}
  \begin{split}
  \|U_{0}\|^2_{L^{2}_{X\in\mathbb{R},Y}} & = \|\hat{U_{0}}\|^2_{L^{2}_{\zeta,\theta}}=\int |\hat{U_0}|^2(\zeta,\theta)d\zeta d\theta\\
 &  =\lambda \int \frac{1}{h^2\lambda^2}\psi^2(\eta)|\hat{V_0}|^2(\zeta,\lambda\eta)d\zeta d\eta\\
 &  =\frac{1}{h^2}\int \eta\psi^2(\eta)|\hat{V_0}|^2(\lambda\eta\xi,\lambda\eta)d\xi d\eta\\
                                  &   =\frac{1}{h^2\lambda^2}\int \eta^{-1}\psi^2(\eta)\Big|e^{i\eta\lambda(\xi^{3}/3-\xi+\frac i 2 \xi^{2}/M)}\Big|^2d\xi d\eta\\
&                      =  \frac{1}{h^2\lambda^2}\int \eta^{-1}\psi^2(\eta) e^{-\lambda \eta\xi^2/M} d\xi d\eta\\
&  \lesssim h^{-2}\lambda^{-5/2} M^{1/2}\,,
\end{split}
\end{equation}

where we used \eqref{hatU0} and \eqref{eq:55}. Recalling \eqref{eq:2}, this yields $ \|U_{0}\|_{L^{2}_{X\geq 0,Y}}\lesssim h^{-1}\lambda^{-5/4}M^{1/4}$.

\subsection{Computing the parametrix $U$} 
In the remainder of this section, we restrict ourselves to $a\gtrsim h^{1/2}$. For a suitable  chosen $M$, we prove Strichartz estimates \eqref{eq:41bis} to hold but with a loss in the parameter $\alpha$: $\alpha\leq 1/2-1/10$. We start by computing the $L^{\infty}$ norm of $U$, followed by its $L^q([0,1],L^{\infty})$ norm ; next, we balance lower bounds on space-time norms with our upper bound on the data, proving that if \eqref{eq:41bis} holds for $r=\infty$, this forces $q\geq 5$, which is equivalent to the aforementioned loss on $\alpha$. This provides our counterexample for the endpoint Strichartz estimate $(q,+\infty)$. We then compute the $L^{r}$ norm of $U$ to recover other exponents, and this is ultimately useful in higher dimensions as well.

The phases $\Psi_N$ in the sum defining $U$ (in \eqref{eq:51}) are all linear in ${Z}$: we replace $V_0$ given by \eqref{eq:53} in \eqref{eq:51} and, using Lemma \ref{sumNfinie}, we restrict (up to an $O(h^{\infty})$ term) to a finite sum over $|N|\lesssim h^{-1/3}$ (note that $V_{0}\in L^{1}_{Z}$ from the previous pointwise bounds we obtained). In this finite sum, we may add the same integrals but with $Z<0$: these add up to the $O(h^{\infty})$ term, as $V_{0}$ is asymptotically small for $Z<0$. The inner integral over ${Z}\in \R$ yields
$$
\int e^{i\lambda \eta {Z}(s+S)}\,d{Z}= \frac{2\pi}{\lambda \eta} \delta(s+S)\,,
$$
therefore we get
\begin{equation}
  \label{eq:57}
  U(T,X,Y)=\frac{\lambda}{(2\pi)^{2}h}  \sum_{|N|\lesssim h^{-1/3}} \int e^{i\lambda \eta(Y+\varphi_{N})} \chi_{\lambda}^{a}(E) \psi(\eta) \eta \,d\eta ds d\Sigma dE+O({h^{-\infty}})
\end{equation}
where $\varphi_{N}$ is the (complex) phase
\begin{equation}
  \label{eq:58}
 \varphi_{N}= T \frac{(E-1)}{ \sqrt{1+aE}+\sqrt{1+a}} - \frac N {\lambda \eta} L((\lambda \eta)^{2/3} E)+s (E-1) + i \frac {s^{2}}{2M} +\frac{\Sigma^{3}} 3+ \Sigma (X-E)\,.
\end{equation}
We start by eliminating the $s$ variable in the integral from \eqref{eq:57} with complex phase function $\varphi_N$ defined in \eqref{eq:58}. We have
\begin{equation}
  \label{eq:61}
  \int e^{i\lambda \eta  (s(E-1)+i\frac {s^{2}}{2M})} 
\,ds= \sqrt{\frac{2\pi}{\lambda\eta}} \sqrt M e^{-\frac{\lambda\eta M(E-1)^{2}}{2}} 
\end{equation}
and therefore \eqref{eq:57} becomes
\begin{equation}
  \label{eq:62}
  U(T,X,Y)=\frac{\lambda^{\frac 12}M^{\frac 12}}{ (2\pi)^{\frac 32}h}\sum_{|N|\lesssim h^{-\frac 13}}\int e^{i\lambda \eta(Y+\tilde\varphi_{N})} \chi_{\lambda}^{a }(E)\psi(\eta)\eta^{\frac 12} \,d\Sigma dE d\eta+O(h^{\infty})
\end{equation}
with phase 
\begin{equation}
 \label{eq:63}
\tilde\varphi_{N}   = T \frac{(E-1)}{ \sqrt{1+aE}+\sqrt{1+a}} - \frac N {\lambda \eta} L((\lambda \eta)^{2/3} E)\\{}+ i \frac {M(E-1)^{2}}{2} +\frac{\Sigma^{3}} 3+ \Sigma (X-E)\,.
  \end{equation} 
Recall that $L(\omega)=\frac\pi 2 + \frac 43 \omega^{3/2}-B(\omega^{3/2})$ where $B(u)\sim_{1/u}\sum_{n\geq 1} b_{n}u^{-n}$, hence 
\[
\frac N {\lambda \eta} (L((\lambda \eta)^{2/3} E)-\frac \pi 2 )=\frac N {\lambda \eta} \Big(\frac 43 \lambda\eta E^{3/2}-B(\lambda\eta E^{3/2})\Big)=\frac 43 NE^{3/2}-\frac N {\lambda \eta}B(\lambda\eta E^{3/2}).
\]

\begin{rmq}\label{rmqBNB}
For values $h^{1/2}\lesssim a$, the factor $\exp{(iNB)}$ in our phase does not oscillate anymore: indeed, the phase $\varphi_N$ given in \eqref{eq:58} is stationary in $E$ only when $N\sim T\sim \frac{t}{\sqrt{a}}$ and for $E$ near $1$ (which is forced by the imaginary part of the phase)
\[
NB(\lambda\eta E^{3/2})\sim \frac{N}{\lambda}\sim \frac{t}{\sqrt{a}}\times \frac{h}{a^{3/2}}\lesssim \frac{h}{a^2}.
\]
Therefore, when $h^{1/2} \ll a$ we can actually bring the $\exp{iNB(\cdot)}$ factor in the symbol rather than leave $NB(\cdot)$ in the phase (in order to do explicit computations).
\end{rmq}
We have, from \eqref{eq:63},
\begin{gather}
  \mathrm{Im} (\tilde \varphi_{N})= \frac{M(E-1)^{2}}{2}\\
\partial_{\Sigma} \tilde \varphi_{N}=\Sigma^{2}+X-E\\
\begin{multlined}
  \partial_{E} \tilde \varphi_{N}= T\partial_{E}\left(   \frac{(E-1)}{ \sqrt{1+aE}+\sqrt{1+a}}\right) - \frac N {(\lambda \eta)^{1/3}} L'((\lambda \eta)^{2/3} E)\\
  {}+i\frac M 2 (E-1)-\Sigma\,.
  \end{multlined}
\end{gather}
Therefore, the set $\{\mathrm{Im}(\tilde \varphi_{N})=0,\,\, \nabla_{(\Sigma,E)}\tilde \varphi_{N}=0\}$ coincides with\begin{equation}
  \label{eq:59}
  \{  E=1\,,\,\, \Sigma=\frac{T}{2\sqrt {1+a}} -  \frac N {(\lambda \eta)^{1/3}} L'((\lambda \eta)^{2/3} )\,,\,\,X=1-\Sigma^{2}\}.
\end{equation}
In the $(T,X)$ plane, this is the trajectory moving to the right from $X=1$, $\Sigma=0$. We introduce the following notations : let $\varepsilon_m>0$, $m\in\{0,1,2\}$ be small, $J\in \Z$ and set
\[
I_{J}=4J \sqrt{1+a}+(-2\varepsilon_{0},2\varepsilon_{0}),
\]
\[
\DomainXYT_{J}=\{T\in I_{J}\,,\,\, |X-1|\leq \varepsilon_{1}\,,\,\, |Y-4 J/3|\leq \varepsilon_{2}\,\}.
\]
From now on we will focus on $U$ restricted to a set $\DomainXYT_J$ on which we obtain a lower bound of its $L^{\infty}$ norm. We first need the following result, which states that, if for a given $J$ we consider only points $(T,X,Y)\in \DomainXYT_J$, then in the sum \eqref{eq:62} defining $U(T,X,Y)$ indexed over the number of reflections $N$ there is only one single integral that provides a non-trivial contribution, corresponding to $N=J$.
\begin{prop}
\label{propCE}
For all $n \in \N^{*}$, there exists $C_{n}$ such that for all $0\leq J\lesssim M_a$, for all $1\ll M\ll \lambda$ and for all $(T,X,Y)\in \DomainXYT_{J}$, the following holds
\begin{equation}
  \label{eq:60}
  \left|{\, U(T,X,Y)-\frac{\lambda^{1/2}M^{1/2}}{ (2\pi)^{3/2}h}\int e^{i\lambda \eta(Y+\tilde\varphi_{J})}\chi_{\lambda}^{a}(E)\psi(\eta)\eta^{1/2} \,d\Sigma dE d\eta \, }\right|
 \leq C_{n} \lambda^{-n}\,.
\end{equation}
\end{prop}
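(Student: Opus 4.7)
The goal is to show that every integral in \eqref{eq:62} indexed by $N\ne J$ contributes at most $C_n\lambda^{-n}$, so that summation over $|N|\lesssim h^{-1/3}$, $N\ne J$, still yields $O(\lambda^{-n})$. My plan is a non-stationary phase argument in $\eta$, using the spatial localization $|Y-4J/3|\le\varepsilon_2$ to force a large $\eta$-derivative of the phase whenever $N\ne J$.

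Two preliminary reductions are in order. First, the imaginary part $iM(E-1)^2/2$ of $\tilde\varphi_N$ produces a Gaussian factor $e^{-\lambda\eta M(E-1)^2/2}$ which makes the integrand pointwise $O(\lambda^{-n})$ outside the window $|E-1|\le (C_n\log\lambda/(\lambda M))^{1/2}$, so at the cost of a negligible error I may restrict to this window via a smooth cutoff. Second, under the assumption $a\gtrsim h^{1/2}$, Remark~\ref{rmqBNB} shows that $e^{iNB(\lambda\eta E^{3/2})}$ is a symbol with $\eta$-derivatives bounded uniformly in $N$ (since $NB\lesssim h/a^2\lesssim 1$), and I would absorb it into the amplitude. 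Finally, the $\Sigma$-integration is restricted to the effective range $|\Sigma|\lesssim 1$ either by a preliminary non-stationary phase in $\Sigma$ for $|\Sigma|\gg1$, or equivalently by performing it first to obtain an Airy factor $Ai((\lambda\eta)^{2/3}(X-E))$.

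Using $L(\omega)=\pi/2+\tfrac{4}{3}\omega^{3/2}-B(\omega^{3/2})$, the $\eta$-derivative of the reduced phase $\Phi_N=\lambda\eta(Y+\tilde\varphi_N)$ satisfies
\begin{equation*}
\partial_\eta \Phi_N=\lambda\bigl[Y-\tfrac{4N}{3}E^{3/2}+T(E-1)/(\sqrt{1+aE}+\sqrt{1+a})+\Sigma^3/3+\Sigma(X-E)\bigr]+O(N/\lambda).
\end{equation*}
For $(T,X,Y)\in\DomainXYT_J$ we have $|Y-4J/3|\le\varepsilon_2$, hence $|Y-4N/3|\ge \tfrac{4}{3}|N-J|-\varepsilon_2\gtrsim|N-J|$ whenever $N\ne J$. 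In the localized region $|E-1|\lesssim(\log\lambda/(\lambda M))^{1/2}$ and $|\Sigma|\lesssim 1$, the perturbations $\lambda N|E-1|$, $\lambda T|E-1|$, $\lambda|\Sigma|^3$, $\lambda|\Sigma(X-E)|$, and $N/\lambda$ are all $\ll\lambda|N-J|$ provided $M$ is chosen suitably large within the admissible range $1\ll M\ll\lambda$, which will be consistent with the later choice of $M$. Therefore $|\partial_\eta\Phi_N|\gtrsim\lambda|N-J|$ on the effective integration domain, and $n$ iterated integrations by parts in $\eta$ yield a factor $(\lambda|N-J|)^{-n}$ per integral. Summing over $1\leq|N-J|\lesssim h^{-1/3}$ gives $C_n\lambda^{-n}\sum_{k\geq 1}k^{-n}\lesssim C_n'\lambda^{-n}$ for $n\geq 2$, which is the desired bound.

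The main technical obstacle I anticipate is the careful bookkeeping of perturbation sizes in $\partial_\eta\Phi_N$, uniformly in $N$ and $T$, in order to verify the strict domination by $\lambda|N-J|$ throughout the ranges $|N|\lesssim h^{-1/3}$, $|T|\lesssim M_a$, $|E-1|\lesssim(\log\lambda/(\lambda M))^{1/2}$, $|\Sigma|\lesssim 1$, together with the compatibility of the resulting lower bound on $M$ with $M\ll\lambda$; one must also check that the $\eta$-derivatives of the reduced amplitude (after Gaussian localization, $\Sigma$-reduction, and absorption of $e^{iNB}$) remain uniformly controlled so that the IBP gains compound properly.
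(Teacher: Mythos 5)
Your route is genuinely different from the paper's: you perform non-stationary phase in $\eta$, exploiting the localization $|Y-4J/3|\le\varepsilon_2$, whereas the paper never touches the $\eta$ (or $Y$) variable and instead does non-stationary phase in $(\tilde E,\Sigma)$ after writing $T=(4J+2\tilde T)\sqrt{1+a}$, $X=1+\tilde X$, $E=1+(1+a)\tilde E$. There the point is that $\partial_{\tilde E}\tilde\varphi_N\approx (1+a)(2J+\tilde T-2N-\Sigma)+iM(1+a)^2\tilde E$ while $\partial_\Sigma\tilde\varphi_N=\Sigma^2+\tilde X-(1+a)\tilde E$ forces $|\Sigma|\lesssim\varepsilon_1^{1/2}$ near the $\Sigma$-critical set, so the joint stationary set (together with $\mathrm{Im}\,\tilde\varphi_N=0$) is empty unless $N=J$, with a gap of size $|N-J|$. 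Your idea is workable in principle, but two of your perturbation bounds are too crude and would make the argument fail as written.

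First, you bound $\lambda N|E-1|$ and $\lambda T|E-1|$ separately and ask each to be $\ll\lambda|N-J|$. Individually these terms are of size $(N+M_a)\sqrt{\lambda/M}$ (up to logs), so for adjacent indices $|N-J|=1$ with $N\approx J\approx M_a$ you need $M\gg M_a^2/\lambda=h/a^{5/2}$; this is not implied by $1\ll M\ll\lambda$ (the Proposition claims uniformity over that whole range) and degenerates entirely as $a\downarrow h^{1/2}$, where $M_a^2/\lambda=\lambda$. The fix is the cancellation the paper's $\partial_{\tilde E}$ computation encodes: since $T\approx 4J\sqrt{1+a}$, the linear-in-$(E-1)$ part of $T(E-1)/(\sqrt{1+aE}+\sqrt{1+a})-\tfrac43NE^{3/2}$ has coefficient $2(J-N)+\tilde T$, so the two dangerous terms combine into $O(|N-J|\,|E-1|)$, which after multiplication by $\lambda$ is $O(|N-J|\sqrt{\lambda/M})\ll\lambda|N-J|$ for every $M\gg1/\lambda$. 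You must group these terms before estimating. Second, localizing $\Sigma$ only to $|\Sigma|\lesssim1$ is not enough: $\lambda|\Sigma|^3/3$ is then comparable to $\lambda$, i.e.\ to the main term $\lambda|Y-\tfrac43N|\gtrsim\tfrac43\lambda$ when $|N-J|=1$, and the claimed domination fails. You need the sharper localization $\Sigma^2\approx(1+a)\tilde E-\tilde X$, hence $|\Sigma|\lesssim\varepsilon_1^{1/2}$, so that $\lambda|\Sigma^3/3+\Sigma(X-E)|\le C\varepsilon_1^{3/2}\lambda$ is beaten by $\tfrac43\lambda$ for $\varepsilon_1$ small (equivalently, if you integrate in $\Sigma$ first, the resulting Airy factor carries the oscillation $e^{\pm i\frac23\lambda\eta(E-X)_+^{3/2}}$, of size $\varepsilon_1^{3/2}\lambda$ in $\partial_\eta$, which must go into the phase, not the amplitude). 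With these two repairs, plus your acknowledged control of the amplitude's $\eta$-derivatives, the argument closes; as stated, it does not.
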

\begin{proof}

Let $0\leq J\lesssim M_a$ and let $(T,X,Y)\in\DomainXYT_J$: we can write 
\[
T=(4J+2\tilde T)\sqrt{1+a}, \quad X=1+\tilde X,\text{ where } |\tilde T|\leq \varepsilon_0, \quad |\tilde X|\leq \varepsilon_1.
\] 
We also change variable $E=1+(1+a){\tilde E}$: from the Gaussian nature of $\tilde\varphi_N$, $E$ has to stay close to $1$. Using Remark \ref{rmqBNB}, we may move $\exp{iNB(\cdot)}$ in the symbol and relabel the phase to remove the harmless factor $i N \pi/2$; with new variables $\tilde X$ and ${\tilde E}$, the relabelled $\tilde\varphi_N$ reads
\begin{equation}
  \label{eq:65}
  \begin{split}
    \tilde\varphi_{N} =  & \frac{T\tilde E \sqrt{1+a}}{ 1+\sqrt{1+a{\tilde E}}} - \frac 4 3 N (1+(1+a){\tilde E})^{3/2} \\
     & {\quad{}+\frac{\Sigma^{3}} 3+ \Sigma (\tilde X-(1+a){\tilde E})+\frac i 2 M (1+a)^2 {\tilde E}^{2}\,.}
\end{split}
\end{equation}
The derivatives with respect to ${\tilde E},\Sigma$ are
\begin{align*}
  \partial_{\Sigma}\tilde\varphi_N & =\Sigma^2+\tilde X-(1+a){\tilde E}\,\\  
  \partial_{{\tilde E}}\tilde\varphi_N & = \begin{multlined}[t]
T\sqrt{1+a}\Big(\frac{1}{1+\sqrt{1+a{\tilde E}}}-\frac{a{\tilde E}}{2\sqrt{1+a{\tilde E}}(1+\sqrt{1+a{\tilde E}})^2}\Big)\\{}-2N(1+a)(1+(1+a){\tilde E})^{1/2}-(1+a)\Sigma+iM(1+a)^2{\tilde E}\,.
\end{multlined}
\end{align*}
Obviously the set $\{ \mathrm{Im}(\tilde\varphi_{N})=0\,,\,\, \nabla_{({\tilde E},\Sigma)} \tilde\varphi_{N}=0\}$ is given by
\begin{equation}
  \label{eq:66}
\Big\{\,  {\tilde E}=0\,,\,\, \tilde X+\Sigma^{2}=(1+a){\tilde E}=0\,,\,\, \Sigma=\Big(\frac T {2(\sqrt{1+a})} -2N\Big)\,\Big\},
\end{equation}
and therefore, imposing $|\tilde X|\leq \varepsilon_{1}$ implies $|\Sigma|\leq \varepsilon_{1}^{1/2}$ which yields
\[
|\frac T{2\sqrt{1+a}}-2N|=|2J+\tilde T-2N|\leq \varepsilon_{1}^{1/2}.
\] 
From $|\tilde T|\leq \varepsilon_{0}$, we get that for $\varepsilon_{0,1}<\frac 14$, the last inequality forces $N=J$. This proves Proposition \ref{propCE} as for $N\neq J$, we can perform non stationary phase, gaining powers of $\lambda$, and the sum is finite with at most $h^{-1/3}$ terms.
\end{proof}

Using Proposition \ref{propCE} and Remark \ref{rmqBNB} we may rewrite, for $(T,X,Y)\in \DomainXYT_{J}$, recalling that $O(\lambda^{-\infty})=O(h^{\infty})$ as $a\gtrsim  h^{1/2}$,
\begin{multline}
  \label{eq:67}
  U((4J+2\tilde T)\sqrt{1+a},1+\tilde X,Y) = \frac{(1+a)\sqrt {\lambda M}}{(2\pi)^{\frac 32}h} (-i)^{J} \\
\times \int e^{i\lambda\eta (Y+\tilde\psi_{M}+JF)} \sigma_{J,\lambda }(\tilde E,\eta)\, d\Sigma d {\tilde E} d\eta +O(h^{\infty})\,,
  \end{multline}
where $\tilde\varphi_J$ was replaced by $\tilde\psi_{M}(\cdot)+JF({\tilde E})$: in the new variables, $\tilde\psi_{M}$ and $F$ are respectively
\begin{align}
\tilde\psi_{M} (\tilde T,{\tilde E},\Sigma) & =\frac{2\tilde T {\tilde E}(1+a)}{1+\sqrt{1+a{\tilde E}}}+i\frac M2 (1+a)^2 {\tilde E}^2+\frac{\Sigma^3}{3}+\Sigma(\tilde X-(1+a){\tilde E})\,, \\
  \label{eq:68}
F({\tilde E}) & =\frac{4{\tilde E}(1+a)}{1+\sqrt{1+a{\tilde E}}}-\frac 43 (1+(1+a){\tilde E})^{3/2}\,,
\end{align}
and the symbol is $\sigma_{J,\lambda}(E(\tilde E),\eta)$ with  $E(\tilde E)=1+(1+a)\tilde E$ and $$\sigma_{J,\lambda}(E,\eta)=\chi^a_{\lambda}(E)\eta^{1/2}\psi(\eta) \exp(i NB(\lambda \eta E^{3/2}))\,.
$$
Since $\mathrm{Im}(\tilde\psi_{M})=0$ only at ${\tilde E}=0$, we expand $F$ near ${\tilde E}=0$,
\begin{gather*}
  F(0)=-\frac 43, \quad F'(0)=0, \quad F''(0)=-(1+a)(1+2a)\,,\\
F({\tilde E})= -\frac 43-\frac{{\tilde E}^2}{2}(1+a)(1+2a)+O({\tilde E}^3)\,.  
\end{gather*}
Our new phase function $\tilde\psi_{M}+JF$ depends on two large parameters : $M$, to be chosen such that $1\ll M\ll\lambda $ and $J$, taking all values from $1$ to $M_a=a^{-1/2}$, depending on the region $\DomainXYT_J$ containing $(T,X,Y)$.

Let us take $J\leq  M_a\lesssim M$ : in the phase $\lambda\eta(\tilde\psi_{M}+JF)$, we consider the large parameter to be $M\lambda$ and, for $\Lambda= M \lambda (1+a)$, we get
\begin{multline}\label{eq:phaUj}
\lambda\eta(\tilde\psi_{M}+JF)=\lambda \eta \Big(\frac{\Sigma^3}{3}+\Sigma \tilde X-\frac 43 J\Big)+
\Lambda \eta\Big[\Big(\frac{2\tilde T}{1+\sqrt{1+a{\tilde E}}}-\Sigma\Big)\frac {\tilde E}M\\
+\frac i2 (1+a){\tilde E}^2-\frac{J}{2M}{\tilde E}^2(1+2a)+O(\frac{J}{M}{\tilde E}^3)\Big].
\end{multline}
\begin{rmq}\label{rmkimagphase}
In the integral \eqref{eq:67}, we may localize on $|{\tilde E}|\lesssim\Lambda^{-1/2}$ using the imaginary part of the phase; indeed, for larger values of ${\tilde E}$ the phase is exponentially decreasing ; we can then localize near the critical points in $\Sigma$, and $\Sigma^{2}=(1+a){\tilde E}-X$ hence $\Sigma$ becomes uniformly bounded and $\frac 1M\Big|\frac{2\tilde T}{1+\sqrt{1+a{\tilde E}}}-\Sigma\Big | \in O(1/M)$. Moreover, for $J\leq M_a\lesssim M$, the imaginary phase factor $\exp{(i\Lambda\eta(1+a)O(\frac JM {\tilde E}^3)})$ does not oscillate for values $|{\tilde E}|\lesssim \Lambda^{-1/2}$ (i.e. for ${\tilde E}$ such that the contribution of the integral is not exponentially small).
\end{rmq}
\begin{rmq}
Writing, for small ${\tilde E}$, $\frac{2\tilde T}{1+\sqrt{1+a{\tilde E}}}= \tilde T(1-\frac a4 {\tilde E}+O(a^2{\tilde E}^2))$, we obtain the first few terms of the Taylor expansion in $\tilde E$ of the phase with large parameter $\Lambda \eta$ as follows
\[
(\tilde T-\Sigma)\frac {\tilde E}M+\frac i2 \nu_a {\tilde E}^2+O(\frac JM {\tilde E}^3),\quad \nu_a=1+a+i \Big(\frac JM(1+2a)+\frac{a \tilde T}{2M}\Big).
\] 
\end{rmq}
\begin{rmq}
We are still carrying a symbol $\sigma_{J,\lambda}$; we may safely discard its $\chi_{\lambda}^{a}(E)$ component as $E$ is now localized near $E=1$, and therefore the contributions coming from $(1-\chi_{1}(aE))$ and $(1-\chi_0((\lambda\eta)^{2/3}E))$ are harmless by non stationary phase, and the remaining $\tilde \sigma_{J,\lambda}(E,\eta)=\exp(i J B(\lambda \eta E^{3/2}))$ is elliptic, close to $1$ near $E=1$ and $J/\lambda \ll 1$.
\end{rmq}
We rewrite the integral in ${\tilde E},\Sigma$ in \eqref{eq:67} as
\begin{multline*}
  \int e^{i \lambda\eta (\tilde\psi_{M}+JF)} \sigma_{J,\lambda}\,  d {\tilde E} d\Sigma =  \int e^{i \lambda\eta (\frac{\Sigma^3}{3}+\Sigma \tilde X-\frac 43 J)}  e^{i \Lambda \eta  \left((\tilde T-\Sigma) \frac {\tilde E} M + \frac i2 \nu_a {\tilde E}^{2}+O(\frac J M {\tilde E}^{3})\right)} \\
  \times \tilde \sigma_{J,\lambda }\,  d {\tilde E} d\Sigma +O(h^{\infty}) \,
\end{multline*}
and apply stationary phase in ${\tilde E}$ with complex phase $(\tilde T-\Sigma) \frac {\tilde E} M + \frac i2 \nu_a {\tilde E}^{2}+O(\frac J M {\tilde E}^{3})$ and large parameter $\Lambda\eta$. The second derivative's absolute value equals $|1+i\frac JM+O(a)|+O(\frac JM {\tilde E})\sim \sqrt{1+\frac{J^2}{M^2}}+O(\frac JM\times \Lambda^{-1/2})\sim 1$ for $J\leq M_a\lesssim M$, and stationary phase yields
\begin{equation}
  \label{eq:70}
  \begin{split}
    \int e^{i \lambda\eta (\tilde\psi_{M}+JF)}\sigma_{J,\lambda } \,  d {\tilde E} d\Sigma  = &  \int e^{i \lambda\eta (\frac{\Sigma^3}{3}+\Sigma \tilde X-\frac 43 J)} \\
   & {\quad} \times \sqrt{\frac{2\pi}{\nu_a \Lambda\eta}} e^{ \Lambda\eta  \frac{\nu_a}{2}( {\tilde E}^{2}_{c}+O({\tilde E}_{c}^{3}))}\sigma^{c}_{J,\lambda}\,d\Sigma+O(h^{\infty})  \end{split}
\end{equation}
where the critical point is ${\tilde E}_{c}=\tilde E_c(\Sigma)=i(\tilde T-\Sigma)/(M\nu_a) (1+O(|\tilde T-\Sigma|/M))$ and $\sigma^{c}_{J,\lambda}(\Sigma,\eta)$ is an elliptic symbol with an asymptotic expansion over $\Lambda^{-1}$, with leading order contribution $\tilde \sigma_{J,\lambda}(E(\tilde E_{c}(\Sigma)),\eta)$. 

\begin{rmq}\label{rmqsymbC}
Using Remark \ref{rmkimagphase}, if $|\tilde E_c(\Sigma)|\geq \Lambda^{-(1-\epsilon)/2}$ for some $\epsilon>0$, then the integral in the right hand side term in \eqref{eq:70} is exponentially small. On the other hand, for $|\tilde E_c|\ll 1$, $E(\tilde E_c)=1+(1+a)\tilde E_c$ stays close to $1$ hence $\chi^a_{\lambda}(E(\tilde E_c))$ vanishes together with all its derivatives. Therefore, for all $\Sigma$ such that $\frac{|\tilde T-\Sigma|}{M|\nu_a|}\ll 1$ we have
\begin{equation}\label{symbsigmaCJlambda}
\sigma_{C,J,\lambda}(\Sigma,\eta)=e^{iJB(\lambda\eta E(\tilde E_c(\Sigma))^{3/2})}\Big(1+O(\frac{J}{\Lambda\lambda})\Big)\psi(\eta).
\end{equation}
In particular, \eqref{symbsigmaCJlambda} holds for $|\tilde E_c|\lesssim  \Lambda^{-(1-\epsilon)/2}$, i.e. there where the integral in the RHS term of \eqref{eq:70} is not exponentially decreasing.
\end{rmq}

Since $\Lambda=\lambda M(1+a)$, we have, at $T=(4J+2\tilde T)\sqrt{1+a}$, $X=1+\tilde X$,
\begin{equation}
  \label{eq:71}
|U(T,X,Y)| \sim \frac 1h \left| \int e^{i\lambda\eta (Y-\frac 43J+\frac i 2 M (1+a) \nu_a P^{2}(1+O(P)) +\frac{\Sigma^{3}}{3}+\Sigma\tilde X)}\sigma^{c}_{J,\lambda}(\Sigma,\eta)\, d\Sigma d \eta \right|
\end{equation}
with $P=(\tilde T-\Sigma)/(M\nu_a)$. We are now left with the $\Sigma$ integration:
\begin{equation}
  \label{eq:72}
  I(\tilde T,\tilde X,\eta)=\int e^{i\lambda \eta G(\Sigma,\tilde T,\tilde X)}\sigma^{c}_{J,\lambda}(\Sigma,\eta)\,d\Sigma,
\end{equation}
where $\sigma^{c}_{J,\lambda}$ is elliptic, $\sigma^{c}_{J,\lambda}(\Sigma,\eta)=e^{iJB(\lambda\eta E(\tilde E_c)^{3/2})}(1+O(\frac{J}{\Lambda\lambda}))\psi(\eta)$ for $|P|=\frac{|\tilde T-\Sigma|}{M|\nu_a|}\ll 1$ and
\begin{equation}
  \label{eq:73}
  G(\Sigma, \tilde T, \tilde X)=\frac i 2 \frac{(1+a)}{M\nu_a} (\tilde T-\Sigma)^{2}\Big(1+O(\frac{\tilde T-\Sigma}{M\nu_a})\Big) +\frac{\Sigma^{3}}{3}+\Sigma\tilde X.
\end{equation}
We first discard the $O(P)$ term, as it may be seen later as an harmless perturbation, and forget about the symbol for now: consider
\begin{equation}
  \label{eq:72bis}
  I_{0}(\tilde T,\tilde X,\eta)=\psi(\eta)\int e^{i\lambda \eta G_{0}(\Sigma,\tilde T,\tilde X)}\,d\Sigma,
\end{equation}
with phase
\begin{equation}
  \label{eq:73bis}
  G_{0}(\Sigma,\tilde T, \tilde X)=\frac{i(1+a)}{2 M \nu_a} (\tilde T-\Sigma)^{2}+\frac{\Sigma^{3}}{3}+\Sigma\tilde X=\gamma (\tilde T-\Sigma)^2+\frac{\Sigma^3}{3}+\Sigma \tilde X,
\end{equation}
where we have set $\gamma := \frac i2\frac{(1+a)}{M\nu_a}$, e.g. $\gamma=\frac{i}{2(M+i J+ i\frac{a}{(1+a)}(J+\tilde T/2))}$.
We are after lower bounds for the $L^{\infty}$ norm of $U$, hence we seek values of $\tilde X$ where $I_0$ reaches its maximum. Write
\[
  G_{0}(\Sigma,\tilde T, \tilde X)=\gamma(\gamma^2+2\gamma \tilde T-\tilde X)+\gamma \tilde T^2-\gamma^3/3+ (\Sigma+\gamma)^3/3-(\Sigma+\gamma)(\gamma^2+2\gamma \tilde T-\tilde X).
\]
The last two terms (the only ones depending on $\Sigma$) may be seen as an Airy phase function, and therefore we have
\begin{equation}\label{Airybound}
  \begin{split}
    I_0(\tilde T,\tilde X,\eta)  = & \psi(\eta)e^{i\tilde \lambda (\gamma(\gamma^2+2\gamma \tilde T-\tilde X)+\gamma \tilde T^2-\frac{\gamma^3}3)}\int e^{i\tilde \lambda ( \frac{(\Sigma+\gamma)^3}3-(\Sigma+\gamma)(\gamma^2+2\gamma \tilde T-\tilde X))} d\Sigma\\
 = & \psi(\eta)\tilde\lambda^{-\frac 13}e^{i\tilde \lambda (\gamma(\gamma^2+2\gamma \tilde T-\tilde X)+\gamma \tilde T^2-\frac {\gamma^3}3)}Ai(-\tilde\lambda^{2/3}(\gamma^2+2\gamma \tilde T-\tilde X))\,.
  \end{split}
\end{equation}
Recall $Ai(0)=\frac{1}{3^{2/3}\Gamma(2/3)}\geq 3/10$; moreover there exists a small constant $0<c <1$ such that $|Ai(z)|>1/10$ for all $z\in\mathbb{C}$ with $|z|\leq c$. We can therefore bound from below the modulus of the Airy function in \eqref{Airybound} as follows
\begin{equation}\label{boundAiryTX}
|Ai(-\tilde\lambda^{2/3}(\gamma^2+2\gamma \tilde T-\tilde X))|>\frac 1 {10}
\end{equation}
for all $|\tilde\lambda^{2/3}(\gamma^2+2\gamma \tilde T-\tilde X)|\leq c$. Here $\tilde T,\tilde X$ are real, while $\gamma$ takes complex values and satisfies $|\gamma|=|\frac{i}{2(M+iJ+O(a))}|\sim \frac{1}{M}$ for $J\leq M_a\lesssim M$. Taking $M\geq \frac{4\lambda^{1/3}}{c}$ it follows that \eqref{boundAiryTX} holds true for all $\tilde T\leq  \frac{1}{\lambda^{1/3}}$ and all $|\tilde X|\leq \frac{c}{2\lambda^{2/3}}$.
We now study the behavior of the exponential factor in \eqref{Airybound}. For $\tilde T,\tilde X$ such that \eqref{boundAiryTX} holds we have
\[
 \lambda |\gamma(\gamma^2+2\gamma \tilde T-\tilde X)|\leq c\lambda^{1/3}|\gamma|\leq 1/4.
\]
For $|\gamma|\sim \frac{1}{M}\leq \frac{c}{4\lambda^{1/3}}$ and $\tilde T\leq \frac{1}{\lambda^{1/3}}$, the remaining term in the exponential factor of the Airy integral in \eqref{Airybound} can be bounded as follows
\begin{equation}\label{condT}
\lambda |\gamma \tilde T^2-\gamma^3/3|\leq \lambda^{1/3}|\gamma|+\lambda |\gamma|^3/3\leq c/3\leq 1/3.
\end{equation}
\begin{rmq}
The condition $\lambda^{1/3}\lesssim M$ must hold in order for \eqref{boundAiryTX} to hold and the term $|\tilde\lambda\gamma^3|$ in the exponential factor to stay bounded. For such $M$, to get $|2\gamma \tilde T-\tilde X|\lesssim \lambda^{-2/3}$ we require $|\tilde T|\lesssim \frac{M}{\lambda^{2/3}}$ and $|\tilde X |\lesssim \lambda^{-2/3}$. On the other hand, the condition $\lambda |\gamma|\tilde T^2\lesssim 1$ gives $|\tilde T|\lesssim \sqrt{M/\lambda}$; as $M/\lambda^{2/3}\geq \sqrt{M/\lambda}$ for all $M>\lambda^{1/3}$, in order to have $|I_0(\tilde T,\tilde X)|\sim \lambda^{-1/3}$ we must ask \begin{equation}\label{condTX}
\lambda^{1/3}\lesssim M,\quad |\tilde T|\lesssim  \sqrt{M/\lambda}, \quad |\tilde X|\lesssim \lambda^{-2/3}.
\end{equation}
In particular, taking $M\sim \lambda^{1/3}$ yields $|\tilde T|\lesssim \lambda^{-1/3}$.
\end{rmq}
Let us assume for a moment that the part of phase function depending on $\Sigma$ in \eqref{eq:71} is $G_0$ (instead of $G$) and there is no symbol: \[
|U((4J+2\tilde T)\sqrt{1+a},1+\tilde X,Y)| \sim \frac 1h \left| \int e^{i\lambda\eta (Y-\frac 43J+G_0(\Sigma,\tilde T,\tilde X))}\psi(\eta) \, d\Sigma d \eta \right|\,.
\]
Then, using \eqref{Airybound}, we would immediately get
\begin{multline}\label{U}
|U((4J+2\tilde T)\sqrt{1+a},1+\tilde X,Y)|\sim \frac{\lambda^{-1/3}}{h}\Big|\int e^{i\lambda\eta (Y-\frac 43J+\gamma \tilde T^2-\frac{\gamma^3}{3})}\\\times e^{i\eta \lambda \gamma(\gamma^2+2\gamma \tilde T-\tilde X)}Ai(-(\eta\lambda)^{2/3}(\gamma^2+2\gamma \tilde T-\tilde X))\eta^{-1/3}\psi(\eta)d\eta\Big|,
\end{multline}
and from the discussion above, for $\tilde T,\tilde X$ and $M$ like in \eqref{condTX}, the factor from the second line in \eqref{U}, 
$e^{i\eta \lambda \gamma(\gamma^2+2\gamma \tilde T-\tilde X)}Ai(-(\eta\lambda)^{2/3}(\gamma^2+2\gamma \tilde T-\tilde X))$, may be seen as part of the symbol (it does not oscillate). With \eqref{condT} holding,  we move $e^{i\eta \lambda(\gamma \tilde T^2-\gamma^3/3)}$ into the symbol as well.
The (remaining) phase in \eqref{U} is $\eta \lambda (Y-\frac 43J)$ and therefore $Y$ takes values in a ball of center $\frac 43J$ and radius $\lambda^{-1}$.

We now fix the heuristic assuming that $G$ could be replaced by $G_0$ and there is no symbol; note that we obtained an additional condition on $M$, that is $\lambda^{1/3}\lesssim M$. In the next lemma we prove the difference between $I(\tilde T,\tilde X,\eta)$ in \eqref{eq:72} and $e^{iJB(\lambda\eta)} I_0(\tilde T,\tilde X,\eta)$ with $I_0(\tilde T,\tilde X,\eta)$ given in \eqref{eq:72bis} to be lower order:
\begin{lemma} 
The following holds
\[
I(\tilde T,\tilde X,\eta)=e^{iJB(\lambda\eta)}I_0(\tilde T,\tilde X,\eta)+O(M^{2\epsilon}/\lambda^{-(1-2\epsilon)}).
\]
\end{lemma}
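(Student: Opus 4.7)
The plan is to split the difference $I - e^{iJB(\lambda\eta)}I_0$ into a phase-discrepancy piece, capturing $G - G_0$, and a symbol-discrepancy piece, capturing $\sigma^c_{J,\lambda}-e^{iJB(\lambda\eta)}\psi(\eta)$; each is then controlled against a Gaussian weight supplied by $\mathrm{Im}(G_0)$. From $\gamma = i/(2(M + iJ + O(a)))$ one reads $\mathrm{Im}(\gamma) \sim M/(M^2+J^2)$, so both $e^{i\lambda\eta G_0}$ and $e^{i\lambda\eta G}$ carry a Gaussian factor of effective width $w = \sqrt{(M^2+J^2)/(M\lambda)}$ in $\tau = \tilde T - \Sigma$. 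Truncating the $\Sigma$-integral to $|\tau|\leq wM^\epsilon$ costs only an $O(\lambda^{-\infty})$ error; on this region $|\tilde E_c(\Sigma)|\lesssim M^\epsilon/\sqrt{M\lambda}$ is small, placing us within the regime where the expansion \eqref{symbsigmaCJlambda} for $\sigma^c_{J,\lambda}$ is valid.

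For the phase difference, $G - G_0 = \gamma\tau^2\cdot O(\tau/(M\nu_a))$ from \eqref{eq:73}, so $|\lambda\eta(G-G_0)| \lesssim \lambda|\tau|^3/M^2$. Using $|e^{i\phi}-1|\leq |\phi|$ and integrating against the truncated Gaussian yields a contribution of order $M^{4\epsilon}/\lambda$, comfortably within the stated error. For the symbol, the multiplicative $O(J/(\Lambda\lambda))$ correction in \eqref{symbsigmaCJlambda} is trivially negligible, while a Taylor expansion of $B$ combined with $B'(r) = O(1/r^2)$ from \eqref{eq:B} yields $J\bigl|B(\lambda\eta E(\tilde E_c)^{3/2}) - B(\lambda\eta)\bigr| \lesssim J|\tau|/(M\lambda)$ on the effective support. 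Integration then absorbs this contribution in the same $M^{4\epsilon}/\lambda$ bound.

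The main obstacle is to keep the implicit constants uniform in $J$ across the full range $1\leq J\leq M_a\lesssim M$, and to verify that within the enlarged Gaussian support $|\tau|\leq wM^\epsilon$ the cubic correction $O(\tau/(M\nu_a))$ in the phase is indeed small and $\tilde E_c(\Sigma)$ remains inside the regime where the explicit form \eqref{symbsigmaCJlambda} holds. Both facts follow from the parameter constraint $\lambda^{1/3}\lesssim M\ll\lambda$ already enforced in the construction above, so that the two error contributions combine into the claimed $O(M^{2\epsilon}\lambda^{-(1-2\epsilon)})$ remainder.
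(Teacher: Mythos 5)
Your proposal is correct and follows essentially the same route as the paper: truncate the $\Sigma$-integral to the effective Gaussian support $|\tilde T-\Sigma|\lesssim M|\nu_a|\Lambda^{-(1-\epsilon)/2}$ (your $w M^{\epsilon}$ is the same scale), then telescope the difference into a phase part controlled by $\lambda|G-G_0|\lesssim \Lambda(|\tilde T-\Sigma|/M)^3$ and a symbol part controlled by $J|B(\lambda\eta E(\tilde E_c)^{3/2})-B(\lambda\eta)|\lesssim (J/\lambda)|\tilde E_c|$, and integrate. Your resulting bounds ($O(M^{4\epsilon}/\lambda)$ for each piece) sit inside the claimed $O(M^{2\epsilon}\lambda^{-(1-2\epsilon)})$ remainder since $M\ll\lambda$, so the argument closes exactly as in the paper.
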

\begin{rmq}
In the next section we will take $M\sim \lambda^{1/3}$, and later $M\lesssim \lambda$, both of which produce a lower order remainder for all $0<\epsilon<1/6$.
\end{rmq}
\begin{proof}
Let us definie $\mathcal{C}(\Sigma)=|\tilde T-\Sigma|/(M|\nu_a|)\leq \Lambda^{-(1-\epsilon)/2}$, with small $\epsilon>0$. From \eqref{eq:72} and Remark \ref{rmqsymbC},
\[
I(\tilde T,\tilde X,\eta)=\int_{\mathcal{C}(\Sigma)}e^{i\lambda \eta G(\Sigma,\tilde T,\tilde X)}\sigma^{c}_{J,\lambda}(\sigma,\eta)d\Sigma+O(\lambda^{-\infty}).
\]
Replacing $\sigma^{c}_{J,\lambda}$ in the last integral by \eqref{symbsigmaCJlambda} yields
\[
I(\tilde T,\tilde X,\eta)=\psi(\eta)\int_{\mathcal{C}(\Sigma)}e^{i\lambda \eta G(\Sigma,\tilde T,\tilde X)}e^{iJB(\lambda\eta E(\tilde E_c(\Sigma))^{3/2})}\Big(1+O(\frac{J}{\Lambda\lambda})\Big)d\Sigma+O(\lambda^{-\infty}).
\]
The last integral can be re-arranged as follows
\begin{equation}\label{equaint}
  \begin{multlined}
    \psi(\eta)e^{iJB(\lambda\eta)}\int_{\mathcal{C}(\Sigma)} e^{i\lambda \eta G_0(\Sigma,\tilde T,\tilde X)}    e^{i\lambda \eta (G-G_0)(\Sigma,\tilde T,\tilde X)}
    \\\times e^{iJ(B(\lambda\eta E(\tilde E_c(\Sigma))^{3/2})-B(\lambda\eta))}\Big(1+O(\frac{J}{\Lambda\lambda})\Big)d\Sigma\\
    =\psi(\eta)e^{iJB(\lambda\eta)}\int_{\mathcal{C}(\Sigma)}e^{i\lambda \eta G_0(\Sigma,\tilde T,\tilde X)}\\
    \times \Big[e^{i\lambda \eta (G-G_0)(\Sigma,\tilde T,\tilde X)}e^{iJ(B(\lambda\eta E(\tilde E_c(\Sigma))^{3/2})-B(\lambda\eta))}\Big(1+O(\frac{J}{\Lambda\lambda})\Big)-1\Big]d\Sigma
\\
+\psi(\eta)e^{iJB(\lambda\eta)}\int_{\mathcal{C}(\Sigma)}e^{i\lambda \eta G_0(\Sigma,\tilde T,\tilde X)}d\Sigma\,.
  \end{multlined}
\end{equation}
As $I_0(\tilde T,\tilde X,\eta)=\psi(\eta)\int_{\mathcal{C}(\Sigma)}e^{i\lambda \eta G_0(\Sigma,\tilde T,\tilde X)}d\Sigma+O(\lambda^{-\infty})$, the third line of \eqref{equaint} is nothing but $e^{iJB(\lambda\eta)}I_0(\tilde T,\tilde X,\eta)+O(\lambda^{-\infty})$. It remains to evaluate the integral in the second line of \eqref{equaint}. Using \eqref{eq:73} and \eqref{eq:73bis}, we have, as $|\nu_a|\sim 1$, $\Lambda=M\lambda$,
\[
\lambda \eta |G-G_0|(\Sigma,\tilde T,\tilde X)=\lambda \eta\frac{(1+a)}{2}\frac{(\tilde T-\Sigma)^2}{M|\nu_a|}O\Big(\frac{|\tilde T-\Sigma|}{M|\nu_a|}\Big)\sim \Lambda\Big( \frac{|\tilde T-\Sigma|}{M}\Big)^3.
\]
For $\frac{|\tilde T-\Sigma|}{M|\nu_a|}\leq \Lambda^{-\frac{(1-\epsilon)}{2}}$ we therefore have $|e^{i\lambda \eta (G-G_0)(\Sigma,\tilde T,\tilde X)}-1|\lesssim \Lambda\Big( \frac{|\tilde T-\Sigma|}{M}\Big)^3\lesssim \Lambda^{-(1-3\epsilon)/2}$.
As 
\[
J|B(\lambda\eta E(\tilde E_c(\Sigma))^{3/2})-B(\lambda\eta)|\sim \frac{J}{\lambda}|\tilde E_c|\sim\frac{J}{\lambda}\frac{|\tilde T-\Sigma|}{M},
\]
we get $|e^{iJ(B(\lambda\eta E(\tilde E_c(\Sigma))^{3/2})-B(\lambda\eta))}-1|\lesssim \frac{J}{\lambda}\Lambda^{-(1-\epsilon)/2}$. Therefore, with $\tilde P:=\Lambda^{\frac{1-\epsilon}{2}}(\tilde T-\Sigma)/(M\nu_a)$, the integral in the second line of \eqref{equaint} is bounded with
\[
M\Lambda^{-\frac{1-\epsilon}{2}}\int_{|\tilde P|\leq 1}\Big(\Lambda (\tilde P \Lambda^{-\frac{1-\epsilon}2})^3+\frac{J}{\lambda}(\tilde P\Lambda^{-\frac {1-\epsilon}2)})+\frac{J}{\Lambda\lambda}\Big)d\tilde P\lesssim \frac{M}{\Lambda^{1-\epsilon}}(\Lambda^{\epsilon}+\frac{J}{\lambda})\sim \frac{M}{\Lambda^{1-2\epsilon}}
\,,
\]
as $J\leq M\ll \lambda$, $\Lambda=\lambda M$. We conclude as $ \frac{M}{\Lambda^{1-2\epsilon}}=\frac{M^{2\epsilon}}{\lambda^{1-2\epsilon}}$.
\end{proof}
%


\subsection{Choice of $M$. Dispersive and Strichartz norms for $U$} 

Let $M\geq 4\lambda^{1/3}/c$ and $|\tilde T|\leq \sqrt{M/\lambda}$, as well as $|\tilde X|\leq \lambda^{-2/3}$, $|\tilde Y|:=|Y-4J/3|\lesssim \lambda^{-1}$; from \eqref{U} we get
\begin{equation}
  \label{eq:75}
 h |U((4J+2\tilde T)\sqrt{1+a},1+\tilde X,\frac 43J+ \tilde Y)|\gtrsim \lambda^{-1/3} \,.
\end{equation}
Recall that in the sum over $N$ defining $U$ there are at most $M_a$ terms : summing over $M_a$ intervals $I_{k}$ of size $\sqrt{M/\lambda}$ gives
\begin{equation}
  \label{eq:76}
  \|h U\|_{L^{q}(0,M_a; L^{\infty}_{X,Y})} \gtrsim (M_a\sqrt{M/\lambda})^{1/q}\lambda^{-1/3}\,.
\end{equation}
Asking moreover $M_a\sqrt{M/\lambda}\geq 1$ gives $M_a^2\geq \lambda/M$, $M\geq M_a$.
Recalling \eqref{eq:41bis} and \eqref{eq:56}, we get a condition on $\lambda$:
\begin{equation}
  \label{eq:77}
 (M_a\sqrt{M/\lambda})^{1/q}  \lambda^{-1/3}\lesssim \lambda^{1-1/q} M_{a}^{(1/2-2/q)}\lambda^{-5/4} M^{\frac 14}
\end{equation}
and it turns out that the best choice of parameters in order to maximize $q$ is $a\sim h^{1/3}$,  $M\sim M_a\sim \lambda^{1/3}$ which yields, for large $\lambda$, $q\geq 5$, e.g. $\alpha\leq 2/5$ and a loss $\beta\geq 1/10$ at the endpoint $(5,\infty)$.

\begin{figure}
        \begin{center}
                \includegraphics[width=15cm]{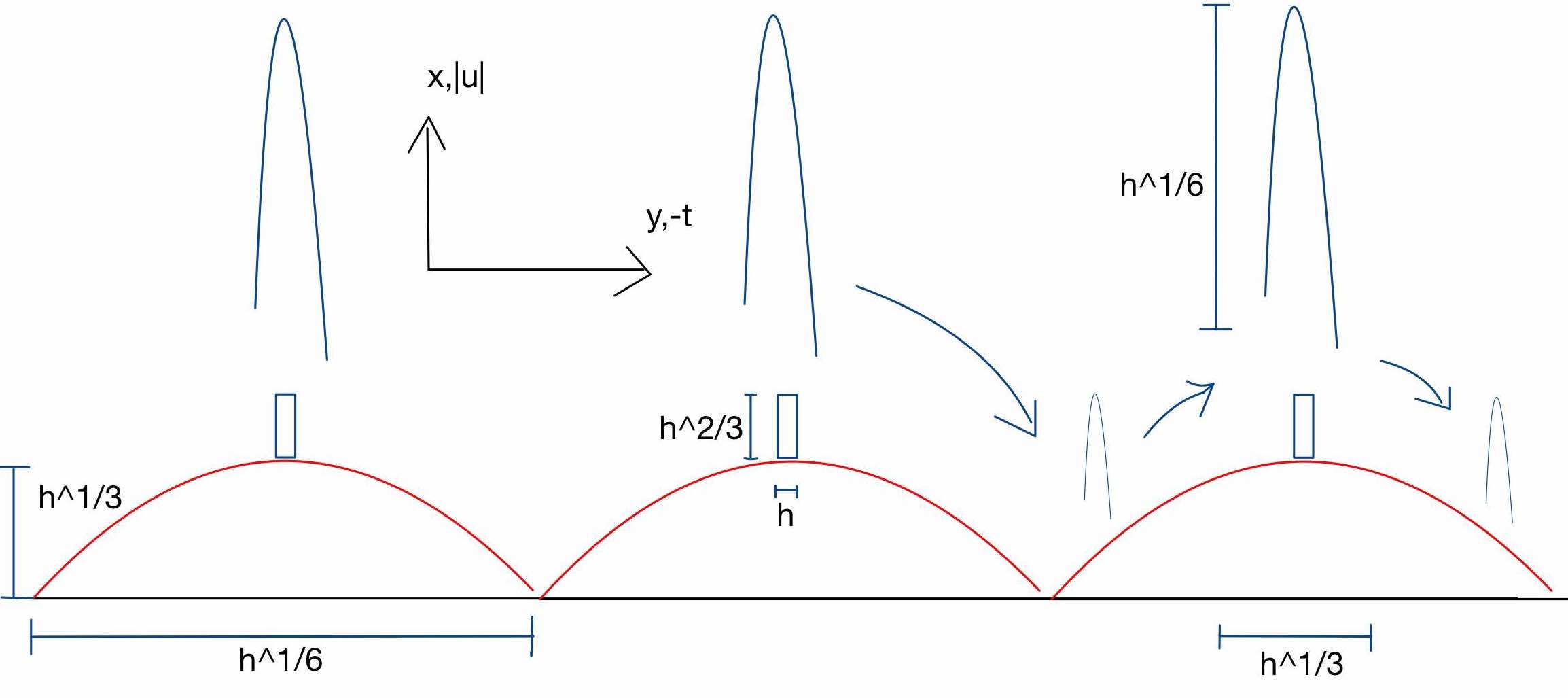}
        \end{center}
        \caption{Wave packet scales in space-time}
        \label{rays}
\end{figure}

We now compute the $L^{r}_{X,Y}$ norms for $r<+\infty$ while retaining the chosen values of $a$ and $M$: for $|\tilde T|\lesssim \lambda^{-1/3}$, $T=4J+\tilde T$, we get that
\begin{align*}
  \int_{X,Y} | h U(T,X,Y)|^{r}\, dXdY  & \geq   \int_{|\tilde X|\leq \lambda^{-2/3},|Y-\frac 43 J|\lesssim \lambda^{-1}} |h U(T,X,Y)|^{r}\, dXdY\\
  & \gtrsim  \int_{|\tilde X|\leq \lambda^{-2/3},|Y-\frac 43 J|\lesssim \lambda^{-1}} \lambda^{-r/3}\, dXdY\\
     & \gtrsim  \lambda^{-5/3-r/3}\,.
\end{align*}
Then, we have
\begin{align*}
  \int_{I_{J}} \left(\int_{X,Y} | h U(T,X,Y)|^{r}\, dXdY\right)^{\frac q r} dT  & \geq \int_{|\tilde T|\lesssim \lambda^{-\frac 13}} \left(\int_{X,Y} |h U(\cdots)|^{r}\, dXdY\right)^{\frac q r} dT\\
                                                                                &    \gtrsim \lambda^{-1/3} \lambda^{-5q/(3r)-q/3}\,,
\end{align*}
and
\begin{align*}
\sum_{J}\int_{I_{J}} \left(\int_{X,Y} |h U(T,X,Y)|^{r}\, dXdY\right)^{\frac q r} dT            & \gtrsim  M_{a}\lambda^{-1/3} \lambda^{-5q/(3r)-q/3}\,,\\
\int_0^{M_{a}} \left(\int_{X,Y} |h U(T,X,Y)|^{r}\, dXdY \right)^{\frac q r}dT            & \gtrsim \lambda^{-5q/(3r)-q/3}\,.
\end{align*}
Recalling \eqref{eq:41bis}, we get
$$
\lambda^{-\frac 5{3r}-\frac 1 3}\lesssim \lambda^{1-\frac 1 q-\frac 2 r} \lambda^{\frac 1 3 (\frac 1 2 -\frac 1 r -\frac 2 q)} \lambda^{-\frac 5 4} \lambda^{\frac 1 {12}}
$$
which translates into
$$
\frac 5 q +\frac 2 r-1\leq 0\,,
$$
which is our statement \eqref{adm-1/12}. This proves Theorem \ref{thm-1}.\qed

We now consider a different choice of parameters: assume $a\sim h^{1/2-\varepsilon}$ and retain $M\sim M_{a}=h^{-1/4-\varepsilon/2}$. Then $M\sim h^{2\varepsilon}\lambda$, we still have $|\tilde X|\lesssim \lambda^{-2/3}$ and $|\tilde Y|\lesssim \lambda^{-1}$, but now $|\tilde T|\lesssim h^{\varepsilon}$. Therefore we now get a condition on $\lambda$ that reads
$$
\lambda^{\frac 1 q}h^{\varepsilon/q} \times \lambda^{-\frac 5{3r}-\frac 1 3}\lesssim \lambda^{1-\frac 1 q-\frac 2 r} \lambda^{\frac 1 2 -\frac 1 r -\frac 2 q}  \lambda^{-\frac 5 4} \lambda^{\frac 1 {4}}h^{2\varepsilon(\frac 3 4 -\frac 1 r -\frac 3q)}\,;
$$
this turns out to match exactly the requirements from \cite{doi} (see Remark 1.7): for $(q,r)$ such that $r\geq 4$, we necessarily have, with a positive $C(\cdot)$ in the meaningful range,
$$
   \frac 3 q +\frac 1 r \leq \frac {15} {24}-\varepsilon C(\varepsilon,q,r)\,.
$$
 One may take $\varepsilon$ to zero and rewrite this condition on $(q,r)$ to highlight its distance to the free space requirement:
 \begin{equation}
   \label{eq:doi2d}
   \frac 1 q \leq  \left( \frac 1 2 -\frac {1-4/r}{12-24/r}\right) \left(\frac 1 2 -\frac 1 r\right)
 \end{equation}
 making clear the restriction $r\geq 4$ to be relevant as well as the loss ($1/12$ for the $(q,\infty)$ pair, e.g. $q\geq 24/5>4$.)
 \subsection{Higher dimensions}

In this section we prove Theorem \ref{thm-2} by taking advantage of the 2D example we just constructed: consider for simplicity, for $d\geq 3$, the isotropic model convex domain   $\Omega_d=\{(x,y)\in\mathbb{R}^{1+d}|, x>0,y\in\mathbb{R}^{d}\}$ and
$\Delta_F=\partial^{2}_{x}+(1+x)\Delta_{y}$ with Dirichlet boundary condition (one may without loss of generality replace $x\Delta_{y}$ by $xQ(y)$ where $Q$ is a constant coefficient second order elliptic operator). Denote by $u(t,x,y_{1})$ the solution to the 2D equation we previously constructed (in unscaled variables), and let $\phi$ be a smooth function from $\mathbb{R}^{d-2}$ to $\mathbb{R}$  such that $\hat \phi$ is positive, has compact support in a ball of size one and $\hat \phi=1$ near the origin. We may moreover select such a bump function so that, for $|y'|\leq 1$ $\phi(y')\geq 1/10$. Set $\phi_{h}(y')= h^{-(d-2)/4}\phi(y'/\sqrt h)$, which is $L^{2}-$normalized.

We seek a solution to the $d-$dimensional wave equation of the form
$$
v(t,x,y_{1},y')=u(t,x,y_{1}) \phi_{h}(y')+w(t,x,y)\,,
$$
with $w(0,x,y)=0$. Plugging our ansatz into the wave equation, we get
$$
(\partial_{t}^{2} -\Delta_{F}) w+\phi_{h}(y')(\partial_{t}^{2} -(\partial_{x}^{2}+(1+x)\partial^{2}_{y_{1}}) u -u(t,x,y_{1}) (1+x)\Delta_{y'} \phi_{h}=0\,.
$$
The middle term vanishes: $u$ is a solution to the 2D wave equation, and $\Delta_{y'} \phi_{h}(y')= \tilde \phi_{h}(y') /h$, where $\tilde \phi_{h}$ is again $L^{2}-$normalized. Therefore,
$$
(\partial_{t}^{2} -\Delta_{F}) w= \frac 1 h  u(t,x,y_{1}) (1+x)\tilde \phi_{h}(y')\,.
$$
If we denote by $F$ the source term, $w$ is the solution given by the Duhamel formula: if the wave equation satisfies an homogeneous Strichartz estimate with exponents $(q,r)$, then
\begin{equation}\label{stricrdinhomo} h^{\beta}\|\chi(hD_t)w\|_{L^q([0,T],
L^r)}\lesssim  h \int_{0}^{T}\|F\|_{L^{2}_{x,y}}\,,
\end{equation}
and therefore we have
\begin{align*}
h^{\beta}\|\chi(hD_t)w\|_{L^q([0,T],
  L^r)} & \lesssim  T \sup_{t}\| u(t,x,y_{1}) (1+x)\tilde \phi_{h}(y')\|_{L^{2}_{x,y}}\\
  & \lesssim  T \| u(0,x,y_{1}) \|_{L^{2}_{x,y_{1}}}\,.
\end{align*}
We are left with computing the $L^{r}_{y'}$ norm of $ \phi_{h}$: from its construction, we have $\|\phi\|_{L^{r}}\sim 1$ and by rescaling,
$$
\|\phi_h\|_{L^{r}}\sim h^{-\frac{d-2} 4} h^{\frac{d-2} {2r}}=h^{\frac{d-2} 2 (\frac{1} {r}-\frac 1 2)}\,.
$$
  From
  $$
  \|u\|_{L^{q}_{t}L^{r}_{x,y_{1}}} \|\phi_{h}\|_{L^{r}_{y'}} -\|w\|_{L^{q}_{t}L^{r}_{x,y}}\leq \|v\|_{L^{q}_{t}L^{r}_{x,y}}
  $$
  and our computation from the 2D case in the case where $a\sim h^{1/2-\varepsilon}$, together with $\lambda \sim h^{-1/4-3\varepsilon/2}$, we eventually get the limiting condition
  $$
2(d-2)(\frac 1 2-\frac 1 r)- \frac 4{q}-\frac{4}{3r}+\frac 5 6\leq 0
$$
in other words
$$
\frac 1 q \leq \left( \frac {d-1} 2 -\frac {1-4/r}{12-24/r}\right) \left(\frac 1 2 -\frac 1 r\right)\,,
$$
which is the desired condition.

\begin{rmq}
    The general philosophy is that of the usual Knapp counterexample: the main propagation is in the direction $y_{1}$, and our wave packet has no time to decorrelate in transverse directions. A similar argument was used in \cite{doi2} to extend the previous counterexamples from the 2D model to the general case of any strictly convex domain in higher dimensions.
  \end{rmq}
  \begin{rmq}
    If one plugs the other case, $a\sim h^{1/3}$ and $M\sim \lambda^{1/3}$ in the higher dimensional setting, it does not provide any interesting condition. The main difference appears to be that in that later case, the 2D counterexample is reaching its peak on very small subintervals (size $\lambda^{-1/3}$) whereas in the limiting sense, for $a\sim h^{1/2}$ and $M\sim \lambda$, the constructed example maintains its peak on the whole time interval, like the usual Knapp counterexample.
  \end{rmq}

  \appendix
  \section{Complements on Airy and related functions}
Well-known properties of Airy functions, including $A_{\pm}$ may be found on classical textbooks on special functions. The recent reference \cite{AFbook} provides an extensive review of such functions.
\subsection{Proof of Lemma \ref{lemL}}
From $A_+$ being analytic with values in $\mathbb{C}$ and never vanishing on the real line, there exist unique analytic functions $\rho(\omega)>0$ and $\theta(\omega)\in\mathbb{R}$ 
such that $A_+(\omega)=\rho(\omega)e^{i\theta(\omega)}$. Then one has $A_-(\omega)=\rho(\omega)e^{-i\theta(\omega)}$ and, from its definition, $L(\omega)=\pi+2\theta(\omega)$ is real on the real axis. 
Using \eqref{eq:Apm}, at $\omega=0$ we find $A_{\pm}(0)=e^{\mp i\pi/3}Ai(0)$ which yields $\frac{A_-(0)}{A_+(0)}=e^{-2i\theta(0)}=e^{2\pi i/3}$, hence $L(0)=\pi+2\theta(0)=\frac{\pi}{3}$. As $Ai(-\omega)=\sum_{\pm}A_{\pm}(\omega)=2\rho(\omega) \cos(\theta(\omega))$, \cite[(2.87), (2.104), (2.106)]{AFbook} yields asymptotic expansions for $\theta$ and $\rho$ as $\omega\rightarrow +\infty$:
\begin{gather}
\omega^{1/2}(2\rho(\omega))^2\sim_{\frac 1\omega} \frac{1}{\pi}\sum_{k=0}^{\infty}\alpha_k \omega^{-3k}, \quad \alpha_0=1,\\
\theta(\omega)+\frac{\pi}{4}\sim_{\frac 1\omega} \frac 23 \omega^{3/2}\sum_{k=0}^{\infty}\beta_k\omega^{-3k},\quad \beta_0=1,\quad \beta_1=-5/32.
\end{gather}
The last formula yields $L(\omega)=\pi+2\theta(\omega)=\frac 43 \omega^{3/2}+\frac{\pi}{2}-B(\omega^{3/2})$, where we set
\[
B(\omega^{3/2})\sim_{\frac 1\omega}-\frac 43\omega^{3/2}\sum_{k\geq 1}\beta_k \omega^{-3k}\,.
\]
Setting $b_{2k-1}:=-\frac 43\beta_k$ and $b_{2k}=0$ yields \eqref{eq:B} with $b_1=\frac{5}{24}>0$. From \cite[(2.95)]{AFbook} we have moreover $(2\rho(\omega))^2\theta'(\omega)=\frac{1}{\pi}$ : this yields $L'(\omega)=2\theta'(\omega)=\frac{1}{2\pi\rho^2(\omega)}>0$,
hence $L$ is strictly increasing. Set $A(\omega)=Ai(-\omega)$, then $A(\omega)=2\rho(\omega)\cos(\theta(\omega))$. Therefore, $A(\omega)=0$ is equivalent to $\theta(\omega)=\pi/2+l\pi$, $l\in \mathbb{Z}$, which is equivalent to $L(\omega)=2\pi(1+l)$. From $L$ being a diffeomorphism from $\mathbb{R}$ onto $(0,\infty)$, one has that for all integer $k\geq 1$, $Ai(-\omega_k)=0$ if and only if $L(\omega_k)=2\pi k$.

Finally, using the Airy equation $A''(z)+z A(z)=0$ and integrating by parts, we get
\begin{align*}
\int_{0}^{\infty}Ai^2(x-\omega)dx=\int_{-\infty}^{\omega}A^2(z)dz & =\omega A^2(\omega)-\int_{-\infty}^{\omega}2zA(z)A'(z)dz\\
 & =\omega A^2(\omega)+\int_{-\infty}^{\omega}2A''(z)A'(z)dz\\
 & =\omega A^2(\omega)+A'^2(\omega).
\end{align*}
From $A'(\omega_k)=2\rho'(\omega_k)\cos(\theta(\omega_k))+2\rho(\omega_k)\theta'(\omega_k)\sin(\theta(\omega_k))$, as well as $A(\omega_k)=2\rho(\omega_k)\cos(\theta(\omega_k))=0$ (therefore $\sin(\theta(\omega_k))\in \{\pm1\}$), we get using $(2\rho(\omega))^2\theta'(\omega)=\frac{1}{\pi}$, $L'(\omega)=2\theta'(\omega)$
\[
\int_0^{\infty}Ai^2(x-\omega_k)dx=A'^2(\omega_k)=Ai'^2(-\omega_k)=4\rho^2(\omega_k)\theta'^2(\omega_k)=\frac{\theta'(\omega_k)}{\pi}=\frac{L'(\omega_k)}{2\pi},
\]
thus the last assertion in \eqref{eq:propL2} holds true. The proof of Lemma \ref{lemL} is complete.\qed
\subsection{Proof of Lemma \ref{lemorthog}}
Using the Airy equation we juste recalled,  one easily checks that $(e_k)_{k}$ are the eigenfunctions of $-\partial^2_x+(1+x)\theta^2$ with Dirichlet boundary condition at $x=0$, associated with eigenvalues $\lambda_k(\theta)$. It will be enough to prove that they form an orthogonal family on $L^2(\mathbb{R}_+)$. In order to do so, we use well-known formulas for Airy functions : in particular, it follows from \cite[(3.53)]{AFbook}) that
\begin{multline*}
    \frac{d}{dx} \Big[\frac{Ai'(\alpha(x+\beta_1))Ai(\alpha(x+\beta_2))-Ai(\alpha(x+\beta_1))Ai'(\alpha(x+\beta_2)))} {\alpha^2(\beta_1-\beta_2)}\Big]\\ =  Ai(\alpha(x+\beta_1))Ai(\alpha(x+\beta_2))\,.
\end{multline*}
Taking $\alpha=1$, $\beta_1=-\omega_k$, $\beta_2=-\omega_j$ with $k\neq j$, we can therefore compute explicitely $\int_{0}^{+\infty} Ai(x-\omega_{k})Ai(x-\omega_{j})\,dx$  to get vanishing traces at $x=0$ and $x=+\infty$, hence
\[
\langle e_k,e_j \rangle_{L^2(\mathbb{R}_+)}=0\,,
\]
and this holds for all $k(\neq j)$.\qed

\subsection{Proof of Lemma \ref{AiryPoisson}}
Consider $\phi(\omega)$, a smooth, compactly supported, function defined for $\omega \in \R$. The function $L(\omega)$ defines a one to one map from $\R$ to $\R_{+}$. Now define $\varphi(x)$ for $x\in \R_{+}$ with
$$
\varphi(L(\omega))=\frac 1 {L'(\omega)} \phi (\omega)\,.
$$
We may extend $\varphi$ to be zero for $x\in \R_{-}$ and still retain a smooth, compactly supported function:  there exists $\omega^{\flat}$ such that $\phi(\omega)=0$ if $\omega<\omega^{\flat}$, and we have $\varphi(x)=0$ for $x<L(\omega^{\flat})$, e.g. $\varphi$ is always supported on $\R_{+}^{*}$. We apply the usual Poisson sumation formula to $\varphi$, which reads
$$
\sum_{k\in \Z} \varphi(2\pi k)=\frac 1 {2\pi} \sum_{N\in \Z} \int_{\R} e^{-i N x}\varphi(x)\,dx\,.
$$
Since $\varphi$ vanishes on $\R_{-}$, this becomes
$$
\sum_{k\in \N^{*}} \varphi(2\pi k)=\frac 1 {2\pi} \sum_{N\in \Z} \int_{\R_{+}} e^{-i N x}\varphi(x)\,dx\,,
$$
and we can now change variables with $x=L(\omega)$:
$$
\sum_{k\in \N^{*}} \varphi(2\pi k)=\frac 1 {2\pi} \sum_{N\in \Z} \int_{\R} e^{-i N L(\omega)}\varphi(L(\omega))L'(\omega)\,d\omega\,,
$$
and recalling that $L(\omega_{k})=2\pi k$ this reads
$$
\sum_{k\in \N^{*}} \varphi(L(\omega_{k}))=\frac 1 {2\pi} \sum_{N\in \Z} \int_{\R} e^{-i N L(\omega)}\varphi(L(\omega))L'(\omega)\,d\omega\,.
$$
Finally, with $\varphi\circ L=\phi/L'$, 
$$
\sum_{k\in \N^{*}} \frac {\phi(\omega_{k})}{L'(\omega_{k})}=\frac 1 {2\pi} \sum_{N\in \Z} \int_{\R} e^{-i N L(\omega)}\phi(\omega)\,d\omega\,,
$$
which is the desired formula. \qed

\def\cprime{$'$} \def\cprime{$'$}

\end{document}